\DeclareMathOperator{\sign}{sign}
\DeclareMathOperator{\Ran}{Ran}
\DeclareMathOperator{\Tr}{Tr}
\DeclareMathOperator{\diag}{diag}
\DeclareMathOperator{\supp}{supp}
\DeclareMathOperator*{\slim}{s-lim}
\DeclareMathOperator*{\wlim}{w-lim}
\DeclareMathOperator{\Det}{Det}
\renewcommand\Im{\hbox{{\rm Im}}\,}
\newcommand{\abs}[1]{\lvert#1\rvert}
\newcommand{\Abs}[1]{\left\lvert#1\right\rvert}
\newcommand{\norm}[1]{\lVert#1\rVert}
\newcommand{\Norm}[1]{\left\lVert#1\right\rVert}
\newcommand{\bbT}{{\mathbb T}}
\newcommand{\bbR}{{\mathbb R}}
\newcommand{\bbC}{{\mathbb C}}
\newcommand{\bbN}{{\mathbb N}}
\newcommand{\bbD}{{\mathbb D}}
\newcommand{\hm}{{H^\sharp}}
\newcommand{\calH}{{\mathcal H}}
\newcommand{\calK}{{\mathcal K}}
\newcommand{\calN}{\mathcal{N}}
\newcommand{\calC}{\mathcal{C}}
\newcommand{\calB}{\mathcal{B}}
\newcommand{\calQ}{\mathcal{Q}}
\newcommand{\Sch}{\mathbf{S}}
\numberwithin{equation}{section}
\theoremstyle{plain}
\newtheorem{theorem}{\bf Theorem}[section]
\newtheorem*{theorem*}{Theorem 1.1$'$}
\newtheorem{lemma}[theorem]{\bf Lemma}
\newtheorem{proposition}[theorem]{\bf Proposition}
\newtheorem{corollary}[theorem]{\bf Corollary}
\theoremstyle{definition}
\theoremstyle{remark}
\newtheorem*{remark*}{\bf Remark}
\newtheorem{remark}[theorem]{\bf Remark}
\newtheorem{example}[theorem]{\bf Example}
\newcommand{\wt}{\widetilde}
\newcommand{\eps}{\varepsilon}
\newcommand{\ac}{\text{(ac)}}
\begin{document}

\title[Scattering theory and singular integrals]{Scattering theory and Banach space valued singular integrals}

\author{Alexander Pushnitski}
\address{Department of Mathematics, King's College London, Strand, London, WC2R~2LS, U.K.}
\email{alexander.pushnitski@kcl.ac.uk}

\author{Alexander Volberg}
\address{Department of Mathematics, Michigan State University, 
East Lansing, MI 48824, U.S.A.}
\email{volberg@math.msu.edu}

\subjclass[2000]{47A40}

\keywords{Scattering theory, trace class, Kato smoothness, singular integrals}

\begin{abstract}
We give a new sufficient condition for existence and completeness
of wave operators in abstract scattering theory. 
This condition generalises both trace class and smooth approaches 
to scattering theory. 
Our construction is based on estimates for the Cauchy transforms
of operator valued measures. 
\end{abstract}

\maketitle

%%%%%%%%%%%%%%%%%%%%%%%%%%%%%%%%%%%%%%%%%%%%%%%%%
%%%%%%%%%%%%%%%%%%%%%%%%%%%%%%%%%%%%%%%%%%%%%%%%%
\section{Introduction}\label{sec.a}
%%%%%%%%%%%%%%%%%%%%%%%%%%%%%%%%%%%%%%%%%%%%%%%%%
%%%%%%%%%%%%%%%%%%%%%%%%%%%%%%%%%%%%%%%%%%%%%%%%%

%%%%%%%%%%%%%%%%%%%%%%%%%%%%%%%%
\subsection{Trace class and smooth versions of scattering theory}\label{sec.a1}
%%%%%%%%%%%%%%%%%%%%%%%%%%%%%%%%

Let $H_0$ and $H_1$ be self-adjoint operators in a separable Hilbert space $\calH$. 
In mathematical scattering theory one is interested in establishing the
unitary equivalence of the  absolutely continuous (a.c.) parts of $H_0$ and $H_1$. 
This unitary equivalence is effected by the \emph{wave operators}
\begin{equation}
W_\pm(H_1,H_0)
=
\slim_{t\to\pm\infty} e^{itH_1}e^{-itH_0}P^{\ac}_{H_0},
\label{a1}
\end{equation}
where $P^{\ac}_{H_0}$ denotes the orthogonal projection onto the a.c. subspace
of the operator $H_0$, and $\slim$ denotes the limit in the strong operator
topology. 
Recall (see e.g. \cite{Yafaev})
that if the wave operators \eqref{a1} exist, they are readily seen to be
isometric on the subspace $\Ran P^{\ac}_{H_0}$, and they intertwine the operators
$H_0$ and $H_1$: 
$$
H_1 W_\pm(H_1,H_0) = W_\pm(H_1,H_0) H_0.
$$
If the wave operators are \emph{complete}, i.e. if the condition 
$$
\Ran W_+(H_1,H_0)
=
\Ran W_-(H_1,H_0)
=
\Ran P^{\ac}_{H_1}
$$
holds true, then each one of the two operators $W_\pm(H_1,H_0)$  provides
a unitary equivalence between the a.c. parts of $H_0$ and $H_1$. 
If both $W_\pm(H_0,H_1)$ and $W_\pm(H_1,H_0)$ exist, then
all of these wave operators are complete.

There are two fundamental results on existence and completeness of 
wave operators: the Kato-Rosenblum theorem and the Kato smoothness
theorem; let us recall these results.
For $1\leq p<\infty$, we denote by $\Sch_p$ the usual Schatten-von Neumann class
with the norm $\norm{\cdot}_p$.

%%%%%%%%%%%%%%%%%%%
\begin{theorem}[Kato-Rosenblum \cite{Kato1,Rosenblum}]
\label{thm-KR}
%%%%%%%%%%%%%%%%%%%
If the difference $H_1-H_0$ belongs to the trace class $\Sch_1$, then the wave
operators $W_\pm(H_1,H_0)$ exist and are complete. 
\end{theorem}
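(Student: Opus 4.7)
The plan is to establish existence of $W_\pm(H_1,H_0)$; completeness then follows automatically, since the hypothesis $H_1-H_0\in\Sch_1$ is symmetric in $H_0,H_1$, and as noted in the introduction the existence of both families $W_\pm(H_1,H_0)$ and $W_\pm(H_0,H_1)$ forces each to be complete.

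For the existence, I would reduce first to finite rank perturbations via the Schmidt decomposition. Write $V:=H_1-H_0=\sum_n \alpha_n(\cdot,\phi_n)\psi_n$ with $\sum_n\abs{\alpha_n}<\infty$, and let $V_N$ denote the $N$-th partial sum, $H_1^{(N)}:=H_0+V_N$. The chain rule for wave operators reduces existence of $W_\pm(H_1^{(N)},H_0)$ by induction to the rank one case, which can be handled by explicit spectral analysis via the resolvent identity and boundary values of the Borel transform $F_\phi(z):=((H_0-z)^{-1}\phi,\phi)$. The analytic input is that on the a.c.\ spectrum of $H_0$, the boundary value $\Im F_\phi(\lambda+i0)$ exists a.e.\ and is (up to a factor of $\pi$) the spectral density of $\phi$, so the Aronszajn--Donoghue theory produces $W_\pm$ in closed form on $\Ran P_{H_0}^{\ac}$.

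To pass from the finite rank approximations to the trace class limit I would use Rosenblum's lemma: for $\phi\in\calH$ and $f\in\Ran P_{H_0}^{\ac}$ with spectral density bounded by $c$,
\begin{equation*}
\int_{\bbR}\abs{\jap{e^{-itH_0}f,\phi}}^2\,dt \leq 2\pi c\norm{\phi}^2,
\end{equation*}
which follows directly from the spectral representation and Plancherel. This $L^2$-in-time control on matrix elements of the free evolution, together with its counterpart for $H_1^{(N)}$ (available from the explicit rank one construction), is used to bound the telescoping differences $\bigl(W_\pm(H_1^{(N+1)},H_0)-W_\pm(H_1^{(N)},H_0)\bigr)f$ on a dense subset of $\Ran P_{H_0}^{\ac}$ by a constant multiple of $\abs{\alpha_{N+1}}\norm{\phi_{N+1}}\norm{\psi_{N+1}}$. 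Summability of $\sum_n\abs{\alpha_n}$ produces a Cauchy sequence whose strong limit is $W_\pm(H_1,H_0)f$.

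The main obstacle is the last step: the naive time-dependent formula $W_\pm=I+i\int_0^{\pm\infty}e^{itH_1}Ve^{-itH_0}\,dt$ is not absolutely convergent even for trace class $V$, so cancellation in $t$ must be exploited somewhere. In the approach above this is done twice, once through Rosenblum's $L^2$ estimate and once through the intertwining properties of the inductively constructed wave operators; the delicate point is to ensure this bootstrap does not become circular. An alternative that avoids the difficulty is to follow Rosenblum's stationary route, expressing $W_\pm-I$ via Cauchy-type integrals of $V(H_0-z)^{-1}$ across the spectrum and using the trace norm to control the resulting operator-valued measure. This stationary viewpoint is of course exactly in the spirit of the present paper and foreshadows its use of Banach-space-valued singular integrals.
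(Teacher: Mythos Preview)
The paper does not prove Theorem~\ref{thm-KR} directly; it quotes it as classical and then recovers it as the $p=1$ case of the main result, Theorem~\ref{thm.a3}. With $G=\abs{V}^{1/2}\in\Sch_2$ and $J=\sign V$, the operator-valued measure $\mu_0(\delta)=GE_{H_0}(\delta)G^*$ lies in $\Sch_1$ with $\norm{\mu_0(\delta)}_1=\Tr(GE_{H_0}(\delta)G^*)=:\nu_0(\delta)$, a finite scalar measure; hypothesis~\eqref{a4} therefore holds and the machinery of Sections~\ref{sec.b}--\ref{sec.d} applies: the weak-$L^1$ bound for the Cauchy transform of an $\Sch_p$-valued measure (Theorem~\ref{thm.a7}), existence of boundary values $B_0(\lambda+i0)$ in $\Sch_p$ (Lemma~\ref{lma.c2}), invertibility of $I+B_0(\lambda+i0)J$ via the regularised determinant and Privalov's uniqueness theorem (Theorem~\ref{thm.c3}), and finally the abstract criterion Proposition~\ref{prp.d1}. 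The argument is entirely stationary; the groups $e^{-itH_j}$ never appear.

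Your route is the classical time-dependent one and is genuinely different. The ingredients you list---rank-one reduction, Rosenblum's $L^2$ estimate---are correct, but the telescoping step has precisely the gap you yourself flag. To bound $\bigl(W_\pm(H_1^{(N+1)},H_0)-W_\pm(H_1^{(N)},H_0)\bigr)f$ by a constant times $\abs{\alpha_{N+1}}$ you need an $L^2$-in-time bound for $e^{-itH_1^{(N)}}$ acting on $W_\pm(H_1^{(N)},H_0)f$, with a constant \emph{uniform in $N$}; the ``explicit rank one construction'' does not deliver this, because the constant in Rosenblum's lemma for $H_1^{(N)}$ depends on the spectral density of the relevant vector with respect to $H_1^{(N)}$, which you have not controlled. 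The standard repairs (Kato's own argument, or Pearson's) bypass the intermediate operators $H_1^{(N)}$ entirely and work directly with $H_0$ and $H_1$ through the $\Sch_2$ factorisation of $V$. Your proposed ``alternative'' stationary route is in fact the one that works cleanly, and it is essentially what the paper carries out in greater generality.
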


In order to state the Kato smoothness theorem, 
we write the perturbation
$H_1-H_0$ in a factorised form, i.e. we assume that 
\begin{equation}
H_1=H_0+V, \qquad V=G^*JG, 
\label{a2}
\end{equation}
where $G$ and $J=J^*$ are bounded operators in $\calH$. 
Of course, the simplest case of such a factorisation is
$G=\abs{V}^{1/2}$ and $J=\sign(V)$.
%%%%%%%%
\begin{remark*}
In fact, $G$ does not have to be bounded, but for simplicity 
we would like to avoid the discussion of this  technical issue.
We also note that it is often convenient to allow $G$ 
to act from $\calH$ to another  Hilbert space $\calK$, but 
in order to accommodate this case, it suffices simply to make 
straightforward notational changes. 
\end{remark*}
%%%%%%%%
We recall that an operator
$G$ is called $H_0$-smooth, if for some constant $C>0$  and for all intervals
$\delta\subset\bbR$ the estimate
\begin{equation}
\norm{GE_{H_0}(\delta)G^*}\leq C\abs{\delta}
\label{a3}
\end{equation}
holds true; here $E_{H_0}(\cdot)$ 
is the projection-valued spectral measure of $H_0$ and $\abs{\delta}$
is the Lebesgue measure of $\delta$. 

%%%%%%%%%%%%%%%%%%%
\begin{theorem}[Kato \cite{Kato2}]
\label{thm-K}
%%%%%%%%%%%%%%%%%%%
If $H_1$ is represented in the form \eqref{a2}, where
$G$ is both $H_0$-smooth and $H_1$-smooth, then the wave operators
$W_\pm(H_1,H_0)$ exist and are complete. 
\end{theorem}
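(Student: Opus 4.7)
My strategy is the classical dualisation that underlies Kato's smooth method: show that $W(t):=e^{itH_1}e^{-itH_0}$ satisfies a Cauchy criterion as $t\to\pm\infty$ on the a.c.\ subspace of $H_0$, and establish the analogous statement with $H_0$ and $H_1$ swapped to obtain completeness.

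The first ingredient is the equivalent time-dependent characterisation of Kato smoothness: the spectral-measure bound \eqref{a3} for a self-adjoint operator $A$ is equivalent to
\begin{equation*}
\int_{-\infty}^{\infty}\norm{Ge^{-itA}f}^2\, dt \leq C'\norm{f}^2, \qquad f\in\calH,
\end{equation*}
with $C'=2\pi C$. One direction is a Parseval/Plancherel identity applied to $t\mapsto Ge^{-itA}f$ together with boundary values of the sandwiched resolvent; the other is routine. Under the hypothesis of the theorem we obtain such a bound for both $A=H_0$ and $A=H_1$, with constants $C_0$ and $C_1$.

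The second step is Cook's identity. Since $V=G^*JG$ is bounded, $\Dom(H_1)=\Dom(H_0)$ and, after extending by density to all of $\calH$, for any $f,g\in\calH$ one has
\begin{equation*}
\langle (W(t_2)-W(t_1))f,g\rangle \,=\, i\int_{t_1}^{t_2}\langle JGe^{-itH_0}f,\,Ge^{-itH_1}g\rangle\, dt.
\end{equation*}
The Cauchy--Schwarz inequality in $L^2(t_1,t_2)$, followed by the $H_1$-smoothness bound applied to $g$, yields
\begin{equation*}
\abs{\langle (W(t_2)-W(t_1))f,g\rangle} \leq \norm{J}\,C_1^{1/2}\norm{g}\left(\int_{t_1}^{t_2}\norm{Ge^{-itH_0}f}^2\, dt\right)^{1/2}.
\end{equation*}
Taking the supremum over $\norm{g}=1$, the right-hand side is independent of $g$, while the integral factor tends to $0$ as $t_1,t_2\to+\infty$ by the $L^1$-summability of $t\mapsto\norm{Ge^{-itH_0}f}^2$ from $H_0$-smoothness. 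Thus $W(t)f$ is Cauchy, proving the existence of $W_+(H_1,H_0)$; the case $t\to-\infty$ is identical. Swapping the roles of $H_0$ and $H_1$---using the symmetric hypothesis that $G$ is smooth with respect to both---yields existence of $W_\pm(H_0,H_1)$, and hence completeness by the general principle recalled in the introduction.

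The main conceptual obstacle is the mismatch between the $L^2$-decay that Kato smoothness provides and the $L^1$-decay of $t\mapsto\norm{Ve^{-itH_0}f}$ that a naive application of Cook's method would demand. The dualisation---testing against $g$ and splitting $V=G^*JG$ into two factors---converts the $L^1$-requirement into a product of two $L^2$-quantities, each controlled by a separate smoothness hypothesis. This symmetry is exactly why Kato's theorem requires smoothness with respect to \emph{both} $H_0$ and $H_1$, and it is the feature that the authors of the present paper will presumably relax in their more general framework.
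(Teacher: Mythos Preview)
Your proof is correct; it is exactly the classical time-dependent argument for Kato's smooth theorem. Note, however, that the paper does not give its own proof of this statement: Theorem~\ref{thm-K} is simply cited from \cite{Kato2} as background.

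That said, it is worth contrasting your approach with the methodology the paper uses for its generalisation, Theorem~\ref{thm.a4}, which contains Theorem~\ref{thm-K} as the special case $\nu_j(\delta)=C\abs{\delta}$. The paper proceeds \emph{stationarily}: it establishes the existence of the strong limits $B_j(\lambda+i\eps)\psi$ as $\eps\to0+$ for a.e.\ $\lambda$ (Theorems~\ref{thm.c4}, \ref{thm.c7}) via the weak-$L^1$ bound on the non-tangential maximal function of the Cauchy transform (Theorem~\ref{thm.a7}), and then invokes the abstract stationary criterion Proposition~\ref{prp.d1}. Your argument, by contrast, is time-dependent: Cook's identity plus the $L^2(\bbR,dt)$ bound that defines smoothness. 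The time-dependent route is shorter and more transparent for Kato's original hypothesis, but it relies on the specific $L^2$-in-time estimate that genuine smoothness provides; once $\nu_0,\nu_1$ are merely $\sigma$-finite measures rather than multiples of Lebesgue measure, that estimate is lost, and the stationary machinery of the paper becomes necessary.
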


\begin{remark}\label{rmk.a1}
In applications, one usually knows the spectral representation of $H_0$ 
in a more or less explicit form but the spectral representation of $H_1$
is unknown.
Thus it is difficult to apply Theorem~\ref{thm-K} directly. 
In practice, one usually assumes a stronger condition than \eqref{a3}, 
such as the existence, H\"older continuity and compactness of 
the derivative
\begin{equation}
\frac{d}{dx}GE_{H_0}(-\infty,x) G^*.
\label{a3a}
\end{equation}
Then, by a version of perturbation theory, using the Fredholm analytic alternative, 
one can establish the  \emph{local} $H_1$-smoothness of $G$ on any compact sub-interval
of the set $\bbR\setminus Z$, where $Z$ is a closed set of 
measure zero. This suffices for the proof of existence and completeness
of the wave operators. 
For the details, see e.g. \cite[Section~4.6]{Yafaev}.
\end{remark}

If $V$ is factorised as $V=G^*JG$, the Kato-Rosenblum theorem
can be equivalently stated as follows:

\begin{theorem*}\label{thm-KR2}
If $H_1$ is represented in the form \eqref{a2},  where $G\in \Sch_2$, then 
the  wave operators
$W_\pm(H_1,H_0)$ exist and are complete. 
\end{theorem*}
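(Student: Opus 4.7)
The plan is simply to deduce Theorem 1.1$'$ from the Kato-Rosenblum theorem (Theorem~\ref{thm-KR}) by showing that the factorisation hypothesis forces $V \in \Sch_1$. The theorem is presented as an equivalent reformulation, so one expects that the proof reduces to the ideal properties of the Schatten classes.

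Concretely, assume $V = G^*JG$ with $G \in \Sch_2$ and $J = J^*$ bounded. Since $\Sch_2$ is a two-sided $*$-ideal in the algebra of bounded operators on $\calH$, both $G^*$ and $JG$ lie in $\Sch_2$. The standard H\"older-type bound $\norm{AB}_1 \leq \norm{A}_2 \norm{B}_2$ for Schatten norms then yields $V = G^*(JG) \in \Sch_1$ with
\[
\norm{V}_1 \leq \norm{G^*}_2 \norm{JG}_2 \leq \norm{J} \norm{G}_2^2.
\]
Having established that $H_1 - H_0 \in \Sch_1$, I would invoke Theorem~\ref{thm-KR} to conclude that the wave operators $W_\pm(H_1,H_0)$ exist and are complete.

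There is no substantive obstacle here: the entire argument is a one-line consequence of the ideal inclusion $\Sch_2 \cdot \Sch_2 \subset \Sch_1$. Conversely, Theorem~\ref{thm-KR} is recovered from Theorem 1.1$'$ by setting $G = \abs{V}^{1/2}$ and $J = \sign V$ whenever $V \in \Sch_1$, so the two formulations are strictly equivalent in strength. The factorised version is presumably stated because its form $V = G^*JG$ parallels the hypothesis of Kato's smoothness theorem (Theorem~\ref{thm-K}), thereby setting the stage for the unified generalisation the paper is going to develop.
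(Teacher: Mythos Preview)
Your proposal is correct and matches the paper's own argument essentially verbatim: the paper notes that $G\in\Sch_2$ gives $V=G^*JG\in\Sch_1$, invokes Theorem~\ref{thm-KR}, and records the converse via $G=\abs{V}^{1/2}$, $J=\sign(V)$ to establish the equivalence. Your remark about why the factorised form is stated is also on point.
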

Indeed, $G\in\Sch_2$ implies that $V=G^*JG\in\Sch_1$; 
on the other hand, if $V\in\Sch_1$, then $\abs{V}^{1/2}\in\Sch_2$, and 
taking $G=\abs{V}^{1/2}$ and $J=\sign(V)$, we obtain the representation
\eqref{a2} with $G\in\Sch_2$. Thus, Theorem~\ref{thm-KR} 
is equivalent to Theorem~1.1$'$.

The hypotheses of 
Theorems~1.1$'$ and \ref{thm-K} are very different, and neither of them 
implies the other one. 
In order to compare these hypotheses,
let us consider

\begin{example}\label{exa.e1}
Let $\calH=L^2((0,1),\calN)$, where $\calN$ is an auxiliary Hilbert space, 
and let $H_0$ be the multiplication by an independent variable in this space:
$(H_0 f)(x)=xf(x)$ for all $f\in\calH$. 
Let $\calK$ be another Hilbert space, and 
let $J=J^*$ be a bounded operator in $\calK$; we set
$H_1=H_0+G^*JG$, where the operator $G:\calH\to\calK$ satisfies the following assumption. 
We suppose that $G$ is of the form
$$
Gf=\int_0^1 G(x)f(x)dx
$$
with $G(x)\in\calB(\calN,\calK)$ for every $x\in(0,1)$;
here and in what follows $\calB(\calN,\calK)$ is the Banach space of all
bounded operators from $\calN$ to $\calK$. 
The hypothesis $G\in\Sch_2$ of Theorem~1.1$'$ can be rewritten as
\begin{equation}
\norm{G}^2_{\Sch_2(\calH,\calK)}
=
\int_0^1\norm{G(x)}_{\Sch_2(\calN,\calK)}^2 dx
<\infty.
\label{e1}
\end{equation}
The hypothesis of $H_0$-smoothness of $G$ is equivalent to 
\begin{equation}
\sup_{x\in(0,1)}\norm{G(x)}_{\calB(\calN,\calK)}
<\infty. 
\label{e2}
\end{equation}
The hypothesis of $H_1$-smoothness of $G$ is difficult to state in an explicit form. 
One usually assumes in addition that $G(x)$ is compact for all $x$ and that the function 
$(0,1)\ni x\mapsto G(x)\in\calB(\calN,\calK)$ 
is H\"older continuous with some positive H\"older exponent. 
Then the derivative \eqref{a3a} 
exists and equals $G(x)G(x)^*$
and thus one can check the local $H_1$-smoothness of $G$ as outlined above. 

If at least one of the Hilbert spaces $\calN$, $\calK$ is finite dimensional, 
then $\calB(\calN,\calK)=\Sch_2(\calN,\calK)$ and so \eqref{e1}
follows from \eqref{e2}. 
In general, none of the two assumptions \eqref{e1}, \eqref{e2}
implies the other one, since none of the two spaces
\begin{equation}
L^2((0,1), \Sch_2), 
\quad
L^\infty((0,1),\calB)
\label{e3}
\end{equation}
is contained in the other one. 
\end{example}

No simple generalisation which would imply both Theorem~1.1$'$ and Theorem~\ref{thm-K} is known. 
It is known, for example, that the trace class $\Sch_1$ in Theorem~\ref{thm-KR}
cannot be replaced by any class $\Sch_p$ with $p>1$ (see \cite[Theorem X.2.3]{Kato3}):

%%%%%%%%%%%%%%%%%%%
\begin{theorem}[Weil-von Neumann-Kuroda]
\label{thm-WvNK}
%%%%%%%%%%%%%%%%%%%
For any self-adjoint operator $H_0$, any $p>1$ and any $\eps>0$ there
exists an operator $V\in\Sch_p$ with $\norm{V}_p<\eps$ such that
$H_1=H_0+V$ has a pure point spectrum.
\end{theorem}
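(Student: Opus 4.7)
The plan is to follow the classical constructive argument of Weyl, von Neumann, and Kuroda. The strategy is to build, by induction, an orthonormal basis $\{e_n\}$ of $\calH$ and real numbers $\{\lambda_n\}$ such that each $e_n$ is strongly localised in a short spectral interval $\Delta_n$ of $H_0$ with $\lambda_n\in\Delta_n$, and then take $V$ to be the perturbation that turns $H_0+V$ into the diagonal operator $e_n\mapsto\lambda_n e_n$, which manifestly has pure point spectrum. As a first reduction, by the spectral theorem and a decomposition of $\calH$ into cyclic subspaces with the error budget $\eps$ distributed geometrically, it suffices to treat the case of a bounded $H_0$ acting as multiplication by $x$ on $L^2(\bbR,d\mu)$ for some compactly supported finite Borel measure $\mu$.

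Fixing a dense sequence $\{\psi_n\}\subset\calH$ and positive numbers $\delta_n$ to be chosen later, I would build $\{e_n\}$ and $\{\lambda_n\}$ so that $e_n\in\Ran E_{H_0}(\Delta_n)$, $\lambda_n\in\Delta_n$, $\abs{\Delta_n}\leq\delta_n$, and so that the span of the first $K_n$ vectors approximates $\psi_n$ within $2^{-n}$. With the basis in place, set $H_1 e_n:=\lambda_n e_n$ (extended by linearity and closure) and $V:=H_1-H_0$. The fundamental pointwise bound is $\norm{V e_n}=\norm{(H_0-\lambda_n)e_n}\leq\delta_n$. For $p=2$ this at once yields the Hilbert--Schmidt estimate $\norm{V}_2^2\leq\sum_n\delta_n^2$, and hence for $p\geq 2$ the same construction gives $\norm{V}_p\leq\norm{V}_2$. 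For $1<p<2$ (Kuroda's extension) one needs to be more careful, exploiting additional structure of the matrix of $V$ in the basis $\{e_n\}$ to obtain $\norm{V}_p^p\leq C\sum_n\delta_n^p$; in either case, the $\delta_n$ are then chosen with $\sum_n\delta_n^p$ small enough to give $\norm{V}_p<\eps$.

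The main obstacle is the basis construction itself: one cannot simultaneously require that $e_n$ be orthogonal to $e_1,\ldots,e_{n-1}$, lie in a spectral interval of prescribed small length, \emph{and} be close to a prescribed target $\psi_n$. The remedy is to allocate several basis vectors per target. At stage $n$, partition $\bbR$ into sufficiently fine disjoint intervals $\{\Delta_{n,k}\}_k$ and decompose the residual $\phi_n:=(I-P_{n-1})\psi_n$ as $\sum_k E_{H_0}(\Delta_{n,k})\phi_n$, taking the normalised summands (after a small adjustment within each $\Ran E_{H_0}(\Delta_{n,k})$ to make them orthogonal to the earlier $e_j$) as new basis vectors. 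The case $p=1$ lies beyond the reach of this construction and is indeed genuinely excluded by Theorem~\ref{thm-KR}: any $V\in\Sch_1$ leaves the a.c.\ part of $H_0$ unitarily invariant, so if $H_0$ has non-trivial a.c.\ spectrum, then $H_0+V$ must also have non-trivial a.c.\ spectrum, obstructing the existence of pure point spectrum.
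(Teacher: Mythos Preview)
The paper does not prove this theorem at all: it is quoted as a classical result with a reference to \cite[Theorem~X.2.3]{Kato3}, serving only to motivate why the trace class hypothesis in Theorem~\ref{thm-KR} cannot be relaxed to $\Sch_p$ for $p>1$. There is therefore nothing in the paper to compare your argument against.

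That said, your sketch is the standard Weyl--von Neumann--Kuroda construction and is sound in outline for $p\geq 2$: localising an orthonormal basis in short spectral intervals and reading off $\norm{V}_2^2=\sum_n\norm{Ve_n}^2\leq\sum_n\delta_n^2$ is exactly the classical route. The one genuine gap is the case $1<p<2$, which you flag but do not actually address. The bound $\norm{Ve_n}\leq\delta_n$ alone does \emph{not} imply $\norm{V}_p^p\leq C\sum_n\delta_n^p$ for $p<2$; column-norm control is too weak in that range. Kuroda's actual argument proceeds differently: one does not estimate the full $V$ at once, but rather builds $V$ as a norm-convergent series $V=\sum_k V_k$ of \emph{finite rank} self-adjoint perturbations, choosing at each step a finite rank $V_k$ (obtained from the spectral localisation you describe, but truncated) with $\norm{V_k}_p<\eps 2^{-k}$, and arranging that $H_0+\sum_{k\leq n}V_k$ acquires more and more eigenvectors spanning a dense set. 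Since finite rank operators lie in every $\Sch_p$ and the $\Sch_p$-norm of a finite rank operator with columns of norm $\leq\delta$ and rank $\leq r$ is at most $r^{1/p}\delta$, one can make each $\norm{V_k}_p$ as small as desired by taking $\delta$ small relative to the (finite) rank at that stage. This is the missing mechanism in your sketch.
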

Thus, if $H_0$ has some a.c. spectrum, one can always choose
$V\in\Sch_p$, $p>1$, in such a way as to ``destroy'' this spectrum, 
and then of course the a.c. parts of $H_0$ and $H_1$ are not 
unitarily equivalent to one another.

%%%%%%%%%%%%%%%%%%%%%%%%%%%%%%%%
\subsection{Main results}\label{sec.a2}
%%%%%%%%%%%%%%%%%%%%%%%%%%%%%%%%
We assume that $H_1$ is represented in the form \eqref{a2}
with bounded operators $G$ and $J=J^*$. 
Our main result is
%%%%%%%%%%%%%%%%%%%
\begin{theorem}
\label{thm.a3}
%%%%%%%%%%%%%%%%%%%
Suppose that for some $p<\infty$ and some non-negative
$\sigma$-finite measure $\nu_0$ on $\bbR$, one has
\begin{equation}
\norm{GE_{H_0}(\delta)G^*}_p\leq \nu_0(\delta)
\label{a4}
\end{equation}
for all intervals $\delta\subset\bbR$. 
Then the wave operators
$W_\pm(H_1,H_0)$ exist and are complete. 
\end{theorem}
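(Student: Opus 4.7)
The argument is stationary. Introduce the $\Sch_p(\calK)$-valued Borel measure
\[
\mu(\delta) := G E_{H_0}(\delta) G^*, \qquad \delta \subset \bbR,
\]
whose total variation in $\Sch_p$ is controlled by $\nu_0$ by hypothesis \eqref{a4}, and study its Cauchy transform
\[
M(z) := \int_\bbR \frac{d\mu(\lambda)}{\lambda - z} \;=\; G (H_0 - z)^{-1} G^*, \qquad z \in \bbC \setminus \bbR,
\]
which is a $\Sch_p$-valued analytic function on each open half-plane.

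The main technical step is to establish, for $\nu_0$-almost every $\lambda \in \bbR$, the existence of the non-tangential boundary values $M(\lambda \pm i0) := \lim_{\eps \downarrow 0} M(\lambda \pm i \eps)$ in the norm of $\Sch_p$, together with an integrability estimate in $\nu_0$ of the associated maximal function $M^*(\lambda) := \sup_{\eps > 0} \norm{M(\lambda + i\eps)}_p$. This is a Banach-space-valued Privalov--Plemelj type theorem for Cauchy transforms of operator-valued measures; it demands Calder\'on--Zygmund theory adapted to $\Sch_p$-valued kernels, with the added subtlety that the dominating scalar measure $\nu_0$ need not be absolutely continuous with respect to Lebesgue measure. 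I expect this to be the main obstacle, and the point to which the title of the paper alludes.

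Granted the boundary-value result, the argument follows the standard template of stationary scattering theory. Since $p < \infty$, each $M(\lambda \pm i 0)$ is a compact operator on $\calK$, so $z \mapsto I + J M(z)$ is a compact-analytic perturbation of the identity in each open half-plane. An analytic Fredholm argument then shows that the exceptional set
\[
\mathcal{E}_\pm := \{\lambda \in \bbR : I + J M(\lambda \pm i0) \text{ is not invertible in } \calB(\calK)\}
\]
has $\nu_0$-measure zero. From the second resolvent identity one extracts the factorisation
\[
G (H_1 - z)^{-1} G^* \;=\; M(z) \bigl(I + J M(z)\bigr)^{-1},
\]
so that boundary values of $G (H_1 - \lambda \mp i0)^{-1} G^*$ also exist in $\Sch_p$ off $\mathcal{E}_\pm$, providing exactly the same type of $\Sch_p$-control for $H_1$ that \eqref{a4} gives for $H_0$.

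Finally, assemble the wave operators via a stationary formula expressing matrix elements of $e^{it H_1} e^{-it H_0} P^{\ac}_{H_0}$ through the resolvents of $H_0$ and $H_1$ evaluated near the real axis. Dominated convergence, powered by the $\nu_0$-a.e.\ boundary value existence and the maximal estimate of Step~1, delivers the $t \to \pm\infty$ limit. Interchanging the roles of $H_0$ and $H_1$ and repeating yields the existence of $W_\pm(H_0, H_1)$, hence the completeness of $W_\pm(H_1, H_0)$. The heart of the matter is the Banach-space-valued Cauchy integral estimate of Step~1; Steps~2--4 are an adaptation of the stationary scheme outlined in Remark~\ref{rmk.a1} and in \cite{Yafaev}.
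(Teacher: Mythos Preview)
Your overall strategy matches the paper's closely: represent $B_0(z)=GR_0(z)G^*$ as the Cauchy transform of the $\Sch_p$-valued measure $\mu$, invoke Banach-space-valued singular integral theory (the UMD property of $\Sch_p$) to obtain boundary values $B_0(\lambda\pm i0)$ in $\Sch_p$, pass to $B_1$ via the factorisation coming from the resolvent identity, and finish with stationary scattering theory. That is exactly the paper's route.

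There is, however, a genuine gap in your invertibility step. You claim that ``an analytic Fredholm argument'' shows the exceptional set $\mathcal E_\pm$ has measure zero. But the analytic Fredholm alternative applies inside the open half-plane, where in fact $I+JM(z)$ is \emph{already invertible for every $z$} (this follows directly from the resolvent identity, which gives $(I-B_1(z)J)(I+B_0(z)J)=I$). Analytic Fredholm therefore tells you nothing new there, and it says nothing at all about the boundary values $M(\lambda\pm i0)$, which are merely a.e.-defined measurable functions of $\lambda$, not analytic in $\lambda$. The paper closes this gap by passing to the scalar analytic function
\[
d(z)=\Det_q\bigl(I+B_0(z)J\bigr), \qquad q\geq p \text{ an integer},
\]
the regularised determinant on $\Sch_q$. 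Since $d$ is analytic and non-vanishing in $\bbC_+$ and has non-tangential boundary values a.e.\ (by continuity of $\Det_q$ on $\Sch_q$ together with the existence of $B_0(\lambda+i0)$ in $\Sch_p\subset\Sch_q$), Privalov's uniqueness theorem forces $d(\lambda+i0)\neq 0$ for Lebesgue-a.e.\ $\lambda$, hence $I+B_0(\lambda+i0)J$ is invertible a.e.

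Two smaller corrections. First, your ``$\nu_0$-a.e.'' statements should be ``Lebesgue-a.e.'' throughout: the weak-type estimate for the non-tangential maximal function (Theorem~\ref{thm.a7}) is with respect to Lebesgue measure, and it is Lebesgue-a.e.\ existence of the limits $B_j(\lambda+i0)$ that feeds into the stationary criterion (Proposition~\ref{prp.d1}). The measure $\nu_0$ may well have a singular part, and ``$\nu_0$-a.e.'' would not suffice. Second, the reduction from $\sigma$-finite $\nu_0$ to finite $\nu_0$ needs a localisation; the paper handles this by first proving the local version on bounded intervals (Theorem~\ref{thm.a5}) and deducing the global statement from it.
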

Obviously, if \eqref{a4} holds true for some $p=p_0$, then 
it holds true for all $p>p_0$. 

In order to compare Theorem~\ref{thm.a3} with 
the Kato-Rosenblum theorem, we note that under the hypothesis
$G\in \Sch_2$ one has
$$
\norm{GE_{H_0}(\delta)G^*}_1
=
\Tr(GE_{H_0}(\delta)G^*)
=
\nu_0(\delta),
$$
where $\nu_0(\delta)$ is a non-negative scalar finite measure on $\bbR$. 
Thus, Theorem~\ref{thm.a3} can be regarded as a generalization 
of the Kato-Rosenblum theorem (extension from $p=1$ to any $p<\infty$). 

On the other hand, Theorem~\ref{thm.a3} is ``almost" a generalization 
of the Kato smoothness theorem. Indeed, if one replaces the $\Sch_p$ norm
in \eqref{a4} by the operator norm, one obtains condition \eqref{a3}
with $\nu_0$ being the Lebesgue measure.

%%%%%%%%%%%%%%%%added on 23 June: begin%%%%%%%%%%%%%%
In order to illustrate this, let us return to Example~\ref{exa.e1}. 
%%%%%%%%%%%%%%%
\begin{corollary}\label{cr.a4}
%%%%%%%%%%%%%%%
In Example~\ref{exa.e1}, suppose that for some $p<\infty$ one has
\begin{equation}
\int_0^1\norm{G(x)}_{2p}^2 dx<\infty.
\label{e4}
\end{equation}
Then the hypothesis of Theorem~\ref{thm.a3} is satisfied and therefore the wave
operators $W_\pm(H,H_0)$ exist and are complete. 
\end{corollary}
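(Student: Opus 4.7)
The plan is to verify hypothesis \eqref{a4} directly by computing $GE_{H_0}(\delta)G^*$ as an explicit Bochner integral of operators on $\calK$ and then applying two standard facts: the triangle inequality for $\Sch_p$ under Bochner integration, and the identity $\norm{AA^*}_p=\norm{A}_{2p}^2$.

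First, I would unpack the structure of $G$ and $G^*$ in the setting of Example~\ref{exa.e1}. Since $H_0$ is multiplication by the independent variable, the spectral projection $E_{H_0}(\delta)$ acts as multiplication by $\chi_{\delta\cap(0,1)}(x)$, and the adjoint $G^*:\calK\to L^2((0,1),\calN)$ is given by $(G^*u)(x)=G(x)^*u$. A short direct computation then yields
\begin{equation*}
GE_{H_0}(\delta)G^*u
=
\int_{\delta\cap(0,1)} G(x)G(x)^* u\, dx,
\qquad u\in\calK,
\end{equation*}
so that $GE_{H_0}(\delta)G^*=\int_{\delta\cap(0,1)} G(x)G(x)^*\, dx$ as a Bochner integral of bounded operators on $\calK$.

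Next, I would apply the triangle inequality for the $\Sch_p$-norm under Bochner integration, which gives
\begin{equation*}
\norm{GE_{H_0}(\delta)G^*}_p
\leq
\int_{\delta\cap(0,1)} \norm{G(x)G(x)^*}_p\, dx.
\end{equation*}
The identity $\norm{G(x)G(x)^*}_p=\norm{G(x)}_{2p}^2$ (which follows from $\norm{B}_p=\norm{B^{1/2}}_{2p}^2$ for positive $B$, applied to $B=G(x)G(x)^*$) reduces this to
\begin{equation*}
\norm{GE_{H_0}(\delta)G^*}_p
\leq
\int_{\delta\cap(0,1)} \norm{G(x)}_{2p}^2\, dx.
\end{equation*}

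Finally, I would define the finite, non-negative measure $\nu_0$ on $\bbR$ by $d\nu_0(x)=\chi_{(0,1)}(x)\norm{G(x)}_{2p}^2\, dx$; the finiteness $\nu_0(\bbR)<\infty$ is exactly the hypothesis \eqref{e4}. The previous display then reads $\norm{GE_{H_0}(\delta)G^*}_p\leq\nu_0(\delta)$, which is precisely \eqref{a4}, so Theorem~\ref{thm.a3} applies and yields existence and completeness of $W_\pm(H_1,H_0)$.

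The only mildly technical point is the justification of the two operator-theoretic facts used: the triangle inequality for $\Sch_p$-valued Bochner integrals (which follows from the fact that $\norm{\cdot}_p$ is lower semicontinuous and from approximation by simple functions) and the Schatten identity $\norm{AA^*}_p=\norm{A}_{2p}^2$; both are standard, so no real obstacle arises.
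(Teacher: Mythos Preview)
Your proof is correct and reaches the same endpoint inequality
\[
\norm{GE_{H_0}(\delta)G^*}_p\leq\int_\delta\norm{G(x)}_{2p}^2\,dx,
\]
but by a genuinely different route than the paper. The paper first proves the auxiliary inequality $\norm{GE_{H_0}(\delta)}_{2p}^2\leq\int_\delta\norm{G(x)}_{2p}^2\,dx$ by diagonalising $G(x)$ pointwise, reducing to a scalar statement, and invoking convexity of the $\ell^p$ norm (Minkowski's integral inequality in disguise); it then combines this with $\norm{GE_{H_0}(\delta)G^*}_p=\norm{GE_{H_0}(\delta)}_{2p}^2$. You instead identify $GE_{H_0}(\delta)G^*$ directly as the Bochner integral $\int_{\delta}G(x)G(x)^*\,dx$ in $\Sch_p(\calK)$ and apply the triangle inequality there, together with $\norm{G(x)G(x)^*}_p=\norm{G(x)}_{2p}^2$. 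Your argument is more streamlined and avoids the diagonalisation step entirely; the paper's approach, on the other hand, yields the slightly stronger intermediate bound \eqref{e5} on $\norm{G}_{2p}$ itself, which may be of independent interest. The only care needed in your version is the strong measurability of $x\mapsto G(x)G(x)^*$ into $\Sch_p$ to justify the Bochner integral, but this is implicit in the setup of Example~\ref{exa.e1} (and the paper's own density argument requires an analogous assumption).
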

Thus, in Example~\ref{exa.e1}, it suffices to check the inclusion
$G(\cdot)\in L^2((0,1);\Sch_{2p})$ with some $p<\infty$; 
it is instructive to compare this with spaces \eqref{e3}. 
Of course, a sufficient condition for \eqref{e4} is 
$G(\cdot)\in L^{2p}((0,1);\Sch_{2p})$ with $p>1$. 
\begin{proof}
First let us check that
\begin{equation}
\norm{G}_{2p}^2
\leq
\int_0^1\norm{G(x)}_{2p}^2 dx; 
\label{e5}
\end{equation}
note that in the l.h.s. of \eqref{e5} we have the norm in $\Sch_p(\calH,\calK)$, 
and in the r.h.s. it is the norm in $\Sch_p(\calN,\calK)$. 
It suffices to check this inequality on the dense set of 
continuous operator valued functions $G:[0,1]\to \Sch_2(\calN,\calK)$. 
Diagonalising $G(x)$ for each $x\in(0,1)$ and choosing 
orthonormal bases in $\calN$ and $\calK$, one  reduces the problem to 
the case when $\calN=\calK=\ell^2$ and $G(x)$ is a diagonal 
operator in $\ell^2$, $G(x)=\diag\{g_1(x),g_2(x),\dots\}$. 
Then we have
\begin{gather}
\norm{G}_{2p}^2
=
\left(\sum_n\left(\int_0^1\abs{g_n(x)}^2 dx\right)^{p}\right)^{1/p},
\label{e6}
\\
\int_0^1\norm{G(x)}_{2p}^2 dx
=
\int_0^1\left(\sum_n\abs{g_n(x)}^{2p}\right)^{1/p}dx.
\label{e7}
\end{gather}
Now \eqref{e5} follows from the convexity of the norm in $\ell^p$. 
More precisely,
denoting $h_n(x)=\abs{g_n(x)}^2$ and 
interpreting the sequence $\{h_n(x)\}=h(x)$ as an element of $\ell^p$, 
we obtain
\begin{multline*}
\left(\sum_n\left(\int_0^1\abs{g_n(x)}^2 dx\right)^{p}\right)^{1/p}
=
\left(\sum_n\left(\int_0^1 h_n(x) dx\right)^{p}\right)^{1/p}
=
\Norm{\int_0^1 h(x)dx}_{\ell^{p}}
\\
\leq
\int_0^1 \norm{h(x)}_{\ell^{p}}dx
=
\int_0^1 \left(\sum_n h_n(x)^{p}\right)^{1/p} dx
=
\int_0^1 \left(\sum_n \abs{g_n(x)}^{2p}\right)^{1/p} dx.
\end{multline*}
Together with \eqref{e6} and \eqref{e7}, this proves \eqref{e5}.

In the above argument we can replace $(0,1)$ by any subinterval; 
this yields
$$
\norm{GE_{H_0}(\delta)G^*}_p
=
\norm{GE_{H_0}(\delta)}_{2p}^2
\leq
\int_\delta\norm{G(x)}_{2p}^2 dx
=
\nu_0(\delta), 
$$
where $d\nu_0(x)=\norm{G(x)}_{2p}^2 dx$ is, by assumption, a finite measure. 
Thus, we obtain \eqref{a4}. 
\end{proof}

In Kato-Rosenblum and Kato smoothness theorems, the hypotheses are
symmetric with respect to interchanging $H_0$ and $H_1$. 
In Theorem~\ref{thm.a3}, the hypothesis involves only the spectral measure
of $H_0$; this is convenient in applications, where one usually knows 
the spectral measure of $H_0$ but not the spectral measure of $H_1$. 
However, for completeness we also give a result symmetric with respect 
to interchanging $H_0$ and $H_1$:

%%%%%%%%%%%%%%%%%%%
\begin{theorem}
\label{thm.a4}
%%%%%%%%%%%%%%%%%%%
Suppose that for some non-negative
$\sigma$-finite measures $\nu_0$ and $\nu_1$ on $\bbR$, one has
\begin{equation}
\norm{GE_{H_0}(\delta)G^*}\leq \nu_0(\delta), 
\quad
\norm{GE_{H_1}(\delta)G^*}\leq \nu_1(\delta), 
\label{a5}
\end{equation}
for all intervals $\delta\subset\bbR$. 
Then the wave operators
$W_\pm(H_1,H_0)$ exist and are complete. 
\end{theorem}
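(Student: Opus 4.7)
The plan is to reduce Theorem~\ref{thm.a4} to a localized form of Kato's smoothness theorem by truncating $\nu_0,\nu_1$ to regions of bounded density. First I would Lebesgue-decompose $\nu_i=f_i\,dx+\nu_i^s$ with $f_i\in L^1_{\loc}(\bbR)$ and $\nu_i^s\perp dx$, and fix a Borel set $\Sigma\subset\bbR$ of Lebesgue measure zero carrying both $\nu_0^s$ and $\nu_1^s$. Since $|\Sigma|=0$, one has $E_{H_i}(\Sigma)P^{\ac}_{H_i}=0$ for $i=0,1$, so the singular parts of $\nu_i$ are invisible to the wave operators. For each $N\geq 1$ set
\[
\Omega_N=\bigl\{\lambda\in\bbR\setminus\Sigma:\, f_0(\lambda),\,f_1(\lambda)\leq N\bigr\}\cap[-N,N],
\]
an increasing sequence of bounded Borel sets with $|\bbR\setminus\bigcup_N\Omega_N|=0$. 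On $\Omega_N$ the hypothesis \eqref{a5} sharpens to
\[
\norm{GE_{H_i}(\delta\cap\Omega_N)G^*}\leq N\abs{\delta},\qquad i=0,1,
\]
for every interval $\delta\subset\bbR$; equivalently, $GE_{H_i}(\Omega_N)$ is $H_i$-smooth in the sense of \eqref{a3} with constant $N$, simultaneously for both values of $i$.

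The core of the proof is a local Kato argument on each $\Omega_N$. For $\phi\in\Ran(E_{H_0}(\Omega_N)P^{\ac}_{H_0})$ the smoothness estimate above yields the standard $L^2$-in-time bound $\int_\bbR\norm{Ge^{-itH_0}\phi}^2\,dt\leq 2\pi N\norm{\phi}^2$, and analogously $\int_\bbR\norm{Ge^{-itH_1}g}^2\,dt\leq 2\pi M\norm{g}^2$ for $g\in\Ran(E_{H_1}(\Omega_M)P^{\ac}_{H_1})$. Writing $f(t):=e^{itH_1}e^{-itH_0}\phi$ as the integral of its time-derivative and applying Cauchy-Schwarz with the factorisation $V=G^*JG$, one bounds $\norm{f(t_1)-f(t_2)}^2$ from above by a constant times the product of $\norm{Ge^{-isH_0}\phi}_{L^2(t_2,t_1)}$ and $\norm{Ge^{-isH_1}(f(t_1)-f(t_2))}_{L^2(t_2,t_1)}$. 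The first factor tends to zero as $t_1,t_2\to\pm\infty$ since the total $L^2$-mass is finite; controlling the second factor via $H_1$-smoothness of $GE_{H_1}(\Omega_M)$ applied to the $H_1$-absolutely continuous part of $f(t_1)-f(t_2)$ shows that $f(t)$ is strongly Cauchy, yielding existence of $W_\pm(H_1,H_0)$ on the dense subspace $\bigcup_N\Ran(E_{H_0}(\Omega_N)P^{\ac}_{H_0})$. A standard $\eps/3$-argument, based on $E_{H_0}(\Omega_N)P^{\ac}_{H_0}\to P^{\ac}_{H_0}$ strongly, extends existence to all of $\Ran(P^{\ac}_{H_0})$. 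The hypothesis being symmetric in $H_0$ and $H_1$, the same argument delivers $W_\pm(H_0,H_1)$, and completeness then follows from the general principle recalled in Section~\ref{sec.a1}.

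The main technical obstacle is justifying the control of the $H_1$-singular component of $f(t_1)-f(t_2)$: the vector $f(t_1)-f(t_2)$ need not a priori lie in $\Ran(P^{\ac}_{H_1})$, and there is no direct $L^2$-in-time bound on $\norm{Ge^{-itH_1}P^{s}_{H_1}(\cdot)}$. The cleanest way to handle this is either to approximate each Borel set $\Omega_N$ by a finite union of open intervals on which the densities remain $\leq N$ (reducing matters to the classical local Kato smoothness theorem as in \cite[Section~4.6]{Yafaev} and Remark~\ref{rmk.a1}), or to work directly with the Cauchy-transform estimates for the operator-valued measures $\delta\mapsto GE_{H_i}(\delta)G^*$ that are central to the proof of Theorem~\ref{thm.a3}; the latter approach bypasses intertwining arguments altogether and is presumably why Theorem~\ref{thm.a4} appears alongside Theorem~\ref{thm.a3} in this paper.
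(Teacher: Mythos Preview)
Your time-dependent approach via Cook's method contains a genuine gap that you only partially identify. The real obstacle is not merely that $f(t_1)-f(t_2)$ may have an $H_1$-singular component; even its $H_1$-absolutely continuous part admits no usable $L^2$-in-time bound. You have $H_1$-smoothness of $G$ on $\Omega_M$ only with constant $M$, while the vector $P^{\ac}_{H_1}(f(t_1)-f(t_2))$ is in general spread over all the sets $\Omega_M\setminus\Omega_{M-1}$; summing the resulting estimates $2\pi M\,\norm{E_{H_1}(\Omega_M\setminus\Omega_{M-1})(f(t_1)-f(t_2))}^2$ over $M$ yields nothing finite, so dividing the Cauchy--Schwarz inequality by $\norm{f(t_1)-f(t_2)}$ does not close the argument. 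Your first proposed fix, approximating $\Omega_N$ by finite unions of open intervals on which both densities remain $\leq N$, also fails in general: the level set $\{f_0,f_1\leq N\}$ need not contain any interval at all (for instance if $\{f_0>N\}$ contains a dense open set of small measure), and the appeal to Remark~\ref{rmk.a1} presupposes H\"older continuity and compactness of the derivative \eqref{a3a}, neither of which is available under the bare hypothesis \eqref{a5}.

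The paper bypasses these difficulties by working entirely in the stationary picture, which is precisely your second proposed fix. It deduces Theorem~\ref{thm.a4} from its local version Theorem~\ref{thm.a6} and the abstract criterion Proposition~\ref{prp.d1}, which reduces matters to the a.e.\ existence of the strong limits $B_j(\lambda+i0)\psi$ for each fixed $\psi\in\calH$, together with the weak limits of $\Im B_j(\lambda+i\eps)$. For each $\psi$, the $\calH$-valued (not operator-valued) measure $\delta\mapsto GE_{H_j}(\delta)G^*\psi$ has total variation dominated by $\norm{\psi}\,\nu_j$; since any Hilbert space has the UMD property, Theorem~\ref{thm.a7} and Lemma~\ref{lma.c2} give the non-tangential boundary values $B_j(\lambda+i0)\psi$ for a.e.\ $\lambda$. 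The weak limits \eqref{d1} are obtained by a separate Hardy--Littlewood maximal-function argument using the bound $\norm{GE_{H_j}(\delta)G^*}\leq\nu_j(\delta)$. The time-dependent route via local smoothness on the sets $\Omega_N$ does not appear to be salvageable without in effect reproving these stationary estimates.
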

Note that the norms in \eqref{a5} are the usual operator norms, so in contrast 
with Theorem~\ref{thm.a3}, here we do not require even compactness of 
the operators in \eqref{a5}. 
Clearly, Theorem~\ref{thm.a4} is a direct generalization of the Kato smoothness theorem.

\begin{remark}
One can ask whether the hypothesis of Theorem~\ref{thm.a3} 
ensures that a similar condition holds for the spectral  measure of $H_1$, 
i.e. whether \eqref{a4} ensures the existence of a measure $\nu_1$ such 
that 
$$
\norm{GE_{H_1}(\delta)G^*}_p\leq \nu_1(\delta)
$$
holds true for all intervals $\delta$. 
The answer to this is negative for $p>1$. 
Indeed, let $H_0=0$ and let $V\in\Sch_p$, $p>1$, be a self-adjoint
operator such that all eigenvalues of $V$ are non-degenerate and 
$V\notin\Sch_1$; let $G=\abs{V}^{1/2}$,
$J=\sign(V)$. Then \eqref{a4} holds true with $\nu_0=\norm{V}_p \delta_0$,
where $\delta_0$ is the delta-measure supported at zero. 
On the other hand, it is easy to see that the total variation of the $\Sch_p$-valued
measure $GE_{H_1}(\cdot)G^*$ on $\bbR$  is infinite, 
and so there is no measure $\nu_1$ satisfying the above condition. 
\end{remark}

%%%%%%%%%%%%%%%%%%%%%%%%%%%%%%%%
\subsection{Method of proof}\label{sec.a4}
%%%%%%%%%%%%%%%%%%%%%%%%%%%%%%%%
We use the following notation:
\begin{equation}
R_j(z)=(H_j-zI)^{-1}, 
\quad 
B_j(z)=GR_j(z)G^*,
\quad
j=0,1,
\qquad \Im z\not=0.
\label{a11}
\end{equation}
It is well understood that the proof of existence 
and completeness of wave operators essentially reduces to the proof of existence
of limits $B_0(\lambda+i0)$, $B_1(\lambda+i0)$ in an appropriate sense
for a.e. $\lambda\in\bbR$; see Proposition~\ref{prp.d1} for a precise statement. 
Assume here for simplicity of discussion that the measure
$\nu_0$ in the hypothesis of Theorem~\ref{thm.a3}
is finite rather than $\sigma$-finite. 
In order to establish the existence of the limits $B_0(\lambda+i0)$,
we represent $B_0(z)$ as the Cauchy transform of the 
$\Sch_p$-valued measure
\begin{equation}
\mu_0(\delta)=GE_{H_0}(\delta)G^*, 
\quad \delta\subset \bbR;
\label{a12}
\end{equation}
that is, we write
$$
B_0(z)=\int_{\bbR}\frac{d\mu_0(t)}{t-z}, 
\quad \Im z>0.
$$
Then the $\Sch_p$-valued measure $\mu_0$ has a 
\emph{finite total variation}. We recall that if $X$ is a Banach space 
with the norm $\norm{\cdot}$, 
then an $X$-valued measure $\mu$ on $\bbR$ 
is said to have a finite total variation, if the supremum
\begin{equation}
\norm{\mu}(\bbR)
:=
\sup \textstyle \sum_n \norm{\mu(\delta_n)}
\label{a13}
\end{equation}
over all finite collections of disjoint intervals $\delta_n\subset \bbR$ 
is finite. 
Our key technical result concerns the Cauchy transforms of the 
Banach space valued measures of finite variation. A remarkable class of 
Banach spaces is given by the spaces with the \emph{UMD property}, 
see \cite{Bourgain1,Bourgain2,Burk}. These are the spaces $X$ such that the Cauchy transform
(and many other singular integral transforms) are bounded in $L^2(\bbR,X)$, 
see \eqref{b13}.
Any Hilbert space possesses this property, see e.g. \cite{Stein}.
For any finite $p>1$, the Schatten class $\Sch_p$ possesses the 
UMD property, see \cite{deF}.
%%%%%%%%%%%%%%%%%%%
\begin{theorem}\label{thm.a7}
%%%%%%%%%%%%%%%%%%%
Let $X$ be a Banach space which possesses the UMD property, and 
let $\mu$ be an $X$-valued measure on $\bbR$ which has a finite
total variation. 
Let $\calC\mu$ be the Cauchy transform of $\mu$, 
\begin{equation}
\calC \mu(z)
=
\int_{\bbR}\frac{d\mu(t)}{t-z}, 
\quad \Im z>0,
\label{a14}
\end{equation}
and let $T^<\mu$ be the non-tangential maximal function for $\calC\mu$:
\begin{equation}
(T^<\mu)(\lambda)
=
\sup\{\norm{\calC\mu(x+iy)}: y>0, \, \abs{x-\lambda}<y\}.
\label{a15}
\end{equation}
Then $T^<\mu$ belongs to the weak class $L^{1,\infty}(\bbR)$ and 
the estimate
\begin{equation}
\sup_{s>0} \  s \abs{\{\lambda \in\bbR: (T^<\mu)(\lambda)>s\} }\leq C \norm{\mu}(\bbR)
\label{a16}
\end{equation}
holds true with a constant $C$ which depends only on the space $X$. 
\end{theorem}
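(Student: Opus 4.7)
The plan is to adapt the classical Calderón--Zygmund proof of weak $(1,1)$ for the maximal Cauchy transform of a scalar measure of finite variation, replacing the scalar Hilbert transform bound by the UMD $L^2(\mathbb{R},X)$ boundedness at the key step. Fix $s>0$. Using a stopping-time argument on the vector-valued Hardy--Littlewood maximal function $(M\mu)(x)=\sup_{I\ni x}\norm{\mu(I)}/\abs{I}$ (applied to the nonnegative scalar control measure $\norm{\mu}$, which dominates $\norm{\mu(\cdot)}$), I would produce a countable family of pairwise disjoint open intervals $\{I_j\}$ with $s<\norm{\mu(I_j)}/\abs{I_j}\le 2s$ and $\sum_j\abs{I_j}\le C\norm{\mu}(\bbR)/s$, while the Lebesgue--Radon--Nikodym decomposition of $\mu$ on $\bbR\setminus\bigcup_j I_j$ yields a density of $X$-norm $\le s$ a.e.\ there. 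Writing $\mu=g+b$ with $g$ equal to $\mu$ off the bad intervals and equal to $(\mu(I_j)/\abs{I_j})\,dt$ on $I_j$, and $b=\sum_j b_j$ with $b_j=(\mu-g)\restriction I_j$, one has $b_j(I_j)=0$, $\norm{b_j}(I_j)\le 3s\abs{I_j}$, $\norm{g(x)}_X\le Cs$ for a.e.\ $x$, and $\norm{g}_{L^1(X)}\le C\norm{\mu}(\bbR)$.

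For the good part, $\norm{g}_{L^2(X)}^2\le Cs\,\norm{\mu}(\bbR)$ by interpolation between the $L^1$ and $L^\infty$ bounds. The Poisson integral identity
\[
\calC g(x+iy)=\pi\int_\bbR P_y(x-t)g(t)\,dt + i\pi\,(H_y g)(x),
\]
with $H_y$ the truncated Hilbert transform, combined with the standard Cotlar inequality (whose vector-valued version holds on any UMD space, since the singular integral with the Cauchy kernel is $L^2(X)$-bounded by the hypothesis on $X$, see \cite{Bourgain1,Burk}), gives a pointwise control
\[
(T^< g)(\lambda)\le C\,M_{\text{HL}}(Hg)(\lambda)+C\,M_{\text{HL}}(\norm{g}_X)(\lambda),
\]
where $H$ is the full Hilbert transform on $L^2(\bbR;X)$ and $M_{\text{HL}}$ is the scalar Hardy--Littlewood maximal function. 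Both right-hand terms are bounded on $L^2$, so Chebyshev yields
\[
\abs{\{(T^< g)>s/2\}}\le \frac{C}{s^2}\norm{g}_{L^2(X)}^2\le \frac{C}{s}\norm{\mu}(\bbR).
\]

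For the bad part, discard $E:=\bigcup_j 2I_j$, which has $\abs{E}\le C\norm{\mu}(\bbR)/s$. For $\lambda\notin E$ and any $z=x+iy$ in the cone $\abs{x-\lambda}<y$, the cancellation $b_j(I_j)=0$ gives
\[
\calC b_j(z)=\int_{I_j}\Bigl(\frac{1}{t-z}-\frac{1}{c_j-z}\Bigr)d b_j(t),
\]
with $c_j$ the centre of $I_j$; the bracket is bounded by $C\abs{I_j}/\dist(z,I_j)^2$, and the non-tangential geometry converts this to $C\abs{I_j}/\dist(\lambda,I_j)^2$. Hence
\[
(T^< b)(\lambda)\le C\sum_j\frac{\abs{I_j}\,\norm{b_j}(I_j)}{\dist(\lambda,I_j)^2}\le C s\sum_j\frac{\abs{I_j}^2}{\dist(\lambda,I_j)^2},\quad \lambda\notin E,
\]
and integrating in $\lambda$ over $\bbR\setminus E$ gives $\int_{\bbR\setminus E}(T^< b)\,d\lambda\le Cs\sum_j\abs{I_j}\le C\norm{\mu}(\bbR)$, so by Chebyshev $\abs{\{\lambda\notin E:(T^< b)(\lambda)>s/2\}}\le C\norm{\mu}(\bbR)/s$. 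Combining with $\abs{E}$ and the good-part bound gives \eqref{a16}.

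The principal obstacle is the vector-valued Cotlar step: the $L^2(X)$ boundedness of the singular Cauchy operator itself is precisely the definition of UMD, but to pass from this to the boundedness of the non-tangential maximal Cauchy transform $T^<$ on $L^2(X)$ one must verify that the standard Cotlar inequality goes through without requiring any scalar structure on the values. This is known but requires some care with the Banach-valued averages replacing truncations, and it is the only nontrivial input from UMD theory beyond the basic definition.
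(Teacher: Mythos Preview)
Your Calder\'on--Zygmund argument is correct in outline and close in spirit to the paper's, but the execution differs in two respects worth noting. First, the paper never decomposes the measure $\mu$ itself: instead it reduces to \emph{simple} measures $\mu_n=\sum_i\delta_{x_i}e_i$, proves the weak $(1,1)$ bound for those, and passes to general $\mu$ by approximation at the very end (Lemma~\ref{lma.b4}). This sidesteps any appeal to the Radon--Nikodym property, which your decomposition $\mu=g+b$ implicitly uses when you claim an $X$-valued density of norm $\le s$ on the good set; UMD does imply RNP (via super-reflexivity), so your step is legitimate, but it is an additional nontrivial input, and incidentally your stopping condition should read $\norm{\mu}(I_j)/\abs{I_j}$ rather than $\norm{\mu(I_j)}/\abs{I_j}$. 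Second, the paper builds up to $T^<$ in three stages---weak $(1,1)$ for $H\mu$ on simple measures (Lemma~\ref{lma.b1}), then a Cotlar-type upgrade to the maximal truncated transform $H^\sharp\mu$ carried out entirely in weak $L^1$ via the nonlinear operator $M_\beta$ (Lemma~\ref{lma.b2}), then the pointwise comparison $T^<\mu\le H^\sharp\mu+CM\norm{\mu}$ (Lemma~\ref{lma.b3})---whereas you attack $T^<$ directly and treat the good part by the $L^2(X)$ bound for $T^<$ on functions. On that last point your displayed ``Poisson identity'' has the two pieces interchanged; the correct split is $\calC g(x+iy)=(Q_y*g)(x)+i\pi(P_y*g)(x)$ with $Q_y$ the conjugate Poisson kernel, and then the Fourier identity $Q_y*g=\pi\,P_y*(Hg)$ gives $T^<g\le C\,M(\norm{Hg}_X)+C\,M(\norm{g}_X)$ at once---so the vector-valued Cotlar inequality you flag as the principal obstacle is in fact unnecessary. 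Your route is more direct; the paper's avoids RNP, is more self-contained, and tracks explicit constants throughout.
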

We prove Theorem~\ref{thm.a7} in Section~\ref{sec.b}, which forms
the central part of the paper. 
In order to prove Theorem~\ref{thm.a3}, we will apply Theorem~\ref{thm.a7}
to the $\Sch_p$-valued measure $\mu_0$ given by \eqref{a12}. 
From the inclusion $T^<\mu\in L^{1,\infty}(\bbR)$ we derive the existence
of boundary values $B_0(\lambda+i0)$ in $\Sch_p$ for a.e. $\lambda\in\bbR$.
Finally, a simple argument (borrowed from the book \cite{Yafaev})
involving the modified determinant $\Det_q(I+B_0(z)J)$ ($q\geq p$ is any integer) 
and Privalov's uniqueness theorem allows us to prove 
the invertibility of $I+B_0(\lambda+i0)J$ for a.e. $\lambda$; 
by the identity 
\begin{equation}
B_1(z)=(I+B_0(z)J)^{-1}B_0(z)
\label{a17}
\end{equation}
this yields  existence of boundary values 
of $B_1(z)$.  This is done in Section~\ref{sec.c}.

In order to prove Theorem~\ref{thm.a4}, we apply Theorem~\ref{thm.a7}
to the $\calH$-valued measures 
\begin{equation}
GE_{H_0}(\delta)G^*\psi,
\quad
GE_{H_1}(\delta)G^*\psi,
\qquad 
\delta\subset\bbR,
\label{a18}
\end{equation}
for an arbitrary fixed element $\psi\in\calH$.
Since the Hilbert space $\calH$ possesses the UMD property,  
this yields the  existence of the limits 
\begin{equation}
B_0(\lambda+i0)\psi,
\quad
B_1(\lambda+i0)\psi,
\qquad
\text{ a.e. $\lambda\in\bbR$,}
\label{a19}
\end{equation}
and then the conclusion of Theorem~\ref{thm.a4} can be derived
from the standard technique of stationary scattering 
theory. 

The assumption that $\mu_0$ has a bounded variation is crucial for 
our construction. Indeed, in \cite{Naboko}
for any $p>1$ Naboko has constructed an $\Sch_p$-valued measure
$\mu$ such that the limits $\lim_{\eps\to+0}(\calC\mu)(\lambda+i\eps)$
do not exist even in the weak sense on a set of $\lambda\in\bbR$ of the 
full Lebesgue measure. 
Naboko's measure $\mu$ is a countably additive function from Borel
sets on $\bbR$ to $\Sch_p$, but it fails to have a finite total variation.
More precise interesting results related to this issues can also be 
found in \cite{Naboko}.

%%%%%%%%%%%%%%%%%%%%%%%%%%%%%%%%
\subsection{Local versions}\label{sec.a3}
%%%%%%%%%%%%%%%%%%%%%%%%%%%%%%%%

It is sometimes convenient to compare the spectral structure
of $H_0$ and $H_1$ locally on some interval $\Delta\subset\bbR$. 
Below we give local versions of Theorems~\ref{thm.a3}, \ref{thm.a4};
they do not really require any considerable modification of the technique.
First we need to recall the definitions related to local wave operators.

If $\Delta\subset\bbR$ is an open interval, the local wave operators
$W_\pm(H_1,H_0;\Delta)$ are defined by 
\begin{equation}
W_\pm(H_1,H_0;\Delta)
=
\slim_{t\to\pm\infty} e^{itH_1}e^{-itH_0}E_{H_0}(\Delta)P^{\ac}_{H_0}.
\label{a6}
\end{equation}
The local wave operators \eqref{a6} are called complete, if 
$$
\Ran W_+(H_1,H_0;\Delta)
=
\Ran W_-(H_1,H_0;\Delta)
=
\Ran(E_{H_1}(\Delta) P^{\ac}_{H_1}).
$$
If the wave operators \eqref{a6} exist and are complete, then the a.c. 
parts of the restrictions $H_0|_{\Ran E_{H_0}(\Delta)}$ and 
$H_1|_{\Ran E_{H_1}(\Delta)}$ are unitarily equivalent (see \cite{Yafaev}).
As in the case of the global wave operators, if both 
$W_\pm(H_0,H_1;\Delta)$ and $W_\pm(H_1,H_0;\Delta)$ exist, then
all of these wave operators are complete.

%%%%%%%%%%%%%%%%%%%
\begin{theorem}
\label{thm.a5}
%%%%%%%%%%%%%%%%%%%
Let $H_1=H_0+G^*JG$, where 
the operators $G$ and $J=J^*$ are bounded and $GR_0(z)$
is compact for $\Im z\not=0$. 
Let $p<\infty$ and let $\Delta\subset \bbR$ be an open interval. 
Suppose that for some non-negative finite measure $\nu_0$ on $\Delta$ 
one has
\begin{equation}
\norm{GE_{H_0}(\delta)G^*}_{p}\leq \nu_0(\delta)
\label{a9}
\end{equation}
for all intervals $\delta\subset\Delta$. 
Then the local wave operators
$W_\pm(H_1,H_0;\Delta)$ exist and are complete. 
\end{theorem}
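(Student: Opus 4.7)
The plan is to localise the proof of Theorem~\ref{thm.a3} to the interval $\Delta$, using the compactness of $GR_0(z)$ to compensate for the fact that the $\Sch_p$-bound \eqref{a9} is only local. First, for any finite collection of disjoint subintervals $\{\delta_n\}$ of $\Delta$, the estimate $\sum_n \norm{\mu_0(\delta_n)}_p \leq \sum_n \nu_0(\delta_n) \leq \nu_0(\Delta)<\infty$ shows that the restriction $\mu_0^\Delta$ of $\mu_0(\cdot):=GE_{H_0}(\cdot)G^*$ to $\Delta$ is a $\Sch_p$-valued measure of finite total variation. Since $\Sch_p$ is UMD for $1<p<\infty$, Theorem~\ref{thm.a7} applies to $\mu_0^\Delta$ and, by the standard weak-$L^1$ maximal-function upgrade, the Cauchy transform $\calC\mu_0^\Delta$ has non-tangential boundary limits in $\Sch_p$ for a.e.\ $\lambda\in\bbR$. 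Since the residual term $B_0(z)-\calC\mu_0^\Delta(z)=GE_{H_0}(\bbR\setminus\Delta)R_0(z)G^*$ is operator-norm holomorphic across any compact subinterval of $\Delta$, the full boundary value $B_0(\lambda+i0)$ exists in $\Sch_p$ for a.e.\ $\lambda\in\Delta$.

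The main technical step is to establish invertibility of $I+B_0(\lambda+i0)J$ for a.e.\ $\lambda\in\Delta$. Compactness of $GR_0(z)$ makes $B_0(z)J$ compact, and $(I+B_0(z)J)(I-B_1(z)J)=I$ yields invertibility for $\Im z\neq 0$. Splitting $B_0(z)J=B_0^\Delta(z)J+B_0^{\Delta^c}(z)J$ with $B_0^\Delta:=\calC\mu_0^\Delta$, I observe that $B_0^{\Delta^c}(z)J$ is compact-valued and holomorphic in a complex neighbourhood of any compact subinterval of $\Delta$; so by the analytic Fredholm alternative $I+B_0^{\Delta^c}(z)J$ is invertible on such a neighbourhood outside a discrete exceptional set. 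Off that set, I factor
$$
I+B_0(z)J=\bigl(I+B_0^{\Delta^c}(z)J\bigr)\bigl(I+A(z)\bigr),
\qquad
A(z):=\bigl(I+B_0^{\Delta^c}(z)J\bigr)^{-1}B_0^\Delta(z)J\in\Sch_p,
$$
with $A(z)$ inheriting $\Sch_p$-boundary values a.e.\ on $\Delta$ from $B_0^\Delta$. Choosing an integer $q\geq p$, the regularised determinant $\Det_q(I+A(z))$ is a scalar analytic function on the upper half-plane which, because $I+A(z)$ is invertible there, is not identically zero and admits non-tangential boundary limits a.e.\ on $\Delta$. Privalov's uniqueness theorem (after a conformal map to the unit disc) forces these boundary limits to be nonzero a.e., giving invertibility of $I+A(\lambda+i0)$, and hence of $I+B_0(\lambda+i0)J$, for a.e.\ $\lambda\in\Delta$.

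Finally, the resolvent identity \eqref{a17} yields existence of $B_1(\lambda+i0)$ in $\Sch_p$ a.e.\ on $\Delta$, and a local version of the stationary scheme (the analogue of Proposition~\ref{prp.d1} restricted to $\Delta$) then delivers existence and completeness of $W_\pm(H_1,H_0;\Delta)$. The hardest step is the Fredholm--Privalov argument: since \eqref{a9} only provides $\Sch_p$-control on $\Delta$, the global Schatten-determinant argument used in Theorem~\ref{thm.a3} is not directly available. The compactness of $GR_0(z)$ is precisely what lets classical analytic Fredholm theory handle the off-$\Delta$ contribution $I+B_0^{\Delta^c}(z)J$, thereby reducing matters to a local Schatten-class determinant argument for the genuinely singular piece $B_0^\Delta(z)J$.
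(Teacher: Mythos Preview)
Your proposal is correct and follows the paper's approach essentially verbatim (Theorem~\ref{thm.c6} combined with Proposition~\ref{prp.d1}): the same splitting $B_0=B_0^\Delta+B_0^{\Delta^c}$, the same analytic-Fredholm treatment of the compact off-$\Delta$ piece, the same factorisation $I+B_0(z)J=(I+B_0^{\Delta^c}(z)J)(I+A(z))$ with $A(z)\in\Sch_p$, and the same $\Det_q$/Privalov argument on $A$. Two small points to tidy up: the full $B_0(\lambda+i0)$ and $B_1(\lambda+i0)$ exist in operator norm but need not lie in $\Sch_p$ (only the $\Delta$-part is Schatten-class, which is all you actually use), and the analytic Fredholm alternative for $I+B_0^{\Delta^c}(z)J$ requires invertibility at \emph{some} point of the domain, which does not follow from the invertibility of $I+B_0(z)J$ for $\Im z\neq0$ --- the paper supplies this via a short kernel argument using $\Im(R_0(z)\varphi,\varphi)=0\Rightarrow\varphi=0$.
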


%%%%%%%%%%%%%%%%%%%
\begin{theorem}
\label{thm.a6}
%%%%%%%%%%%%%%%%%%%
Let $H_1=H_0+G^*JG$, where 
the operators $G$ and $J=J^*$ are bounded and $GR_0(z)$
is compact for $\Im z\not=0$. 
Suppose that for some 
non-negative finite measures $\nu_0$ and $\nu_1$ on an interval $\Delta\subset\bbR$, 
one has
\begin{equation}
\norm{GE_{H_0}(\delta)G^*}\leq \nu_0(\delta), 
\quad
\norm{GE_{H_1}(\delta)G^*}\leq \nu_1(\delta), 
\label{a10}
\end{equation}
for all intervals $\delta\subset\Delta$. 
Then the local wave operators
$W_\pm(H_1,H_0;\Delta)$ exist and are complete. 
\end{theorem}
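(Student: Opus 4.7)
The plan is to localize the approach to Theorem~\ref{thm.a4} outlined in Section~\ref{sec.a4}, applying Theorem~\ref{thm.a7} to $\calH$-valued measures supported on $\Delta$. Fix $\psi\in\calH$ and, for $j=0,1$, define the $\calH$-valued Borel measure
\[
\mu_j^\psi(\delta) := GE_{H_j}(\delta\cap\Delta)G^*\psi, \qquad \delta\subset\bbR.
\]
The hypothesis \eqref{a10} yields, for any finite disjoint family $\{\delta_n\}$ of subintervals of $\Delta$,
\[
\sum_n \norm{\mu_j^\psi(\delta_n)}\le\norm{\psi}\sum_n\nu_j(\delta_n)\le\norm{\psi}\,\nu_j(\Delta),
\]
so each $\mu_j^\psi$ has finite total variation on $\bbR$. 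Since the Hilbert space $\calH$ has the UMD property, Theorem~\ref{thm.a7} provides a weak-$L^1$ bound on the non-tangential maximal function of the Cauchy transform $\calC\mu_j^\psi$. A standard Banach-principle argument (taking, for instance, finite linear combinations of Dirac masses as a dense class of test measures for which boundary values trivially exist) upgrades this estimate to the a.e.\ existence in $\calH$ of the non-tangential boundary value $\calC\mu_j^\psi(\lambda+i0)$.

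To pass from $\calC\mu_j^\psi$ back to $B_j(z)\psi$, I would split the spectral measure of $H_j$ across $\Delta$ and $\bbR\setminus\Delta$ and write
\[
B_j(z)\psi = \calC\mu_j^\psi(z) + GR_j(z)E_{H_j}(\bbR\setminus\Delta)G^*\psi.
\]
The second summand is $z$-analytic on $\bbC\setminus(\bbR\setminus\Delta)$; this set contains $\Delta$, so the summand admits (real-analytic) boundary values on $\Delta$ from the upper half-plane. Combining the two steps yields the existence in $\calH$ of the strong limit $B_j(\lambda+i0)\psi$ for every $\psi\in\calH$ and almost every $\lambda\in\Delta$, $j=0,1$. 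With the boundary values of both $B_0$ and $B_1$ available on $\Delta$, I would then invoke the local version of the stationary scattering criterion (the local analogue of Proposition~\ref{prp.d1}) to deduce existence and completeness of $W_\pm(H_1,H_0;\Delta)$. The compactness hypothesis on $GR_0(z)$, inherited by $B_0(z)$ and (via the identity $B_1(z)=(I+B_0(z)J)^{-1}B_0(z)$) by $B_1(z)$, enters exactly as in the proof of Theorem~\ref{thm.a5}: it allows the Fredholm alternative and a Privalov-type uniqueness argument on the modified determinant of $I+B_0(z)J$ to exclude a null set where this inverse blows up, so that the resolvent identity passes to the boundary almost everywhere on $\Delta$.

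The main obstacle I anticipate is the passage from the weak-$(1,1)$ bound of Theorem~\ref{thm.a7} to genuine a.e.\ strong convergence of $\calC\mu_j^\psi(\lambda+i\eps)$ as $\eps\to+0$, in a way that is uniform enough in $\psi$ to cover all of $\calH$ (not merely a countable dense subset). No a priori operator-norm bound on $B_j(\lambda+i\eps)$ as $\eps\to+0$ is available from \eqref{a10}, so this step must rely on the weak-$L^1$ maximal estimate and the standard Banach-principle/Stein maximal machinery, and care will be needed to localize it to the interval $\Delta$ without losing the estimates near its endpoints.
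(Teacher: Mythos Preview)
Your overall localization scheme---split $B_j(z)$ as the Cauchy transform of the $\calH$-valued measure $\mu_j^\psi$ supported on $\Delta$ plus a term analytic across $\Delta$, then feed into Proposition~\ref{prp.d1}---matches the paper's route (Theorem~\ref{thm.c7} plus the argument in Section~\ref{sec.d}). Two points, however, need correction.

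First, the determinant/Fredholm paragraph is both unnecessary and unavailable here. It is unnecessary because the hypothesis \eqref{a10} already gives you the measure estimate for $H_1$, so the limits $B_1(\lambda+i0)\psi$ come directly from the Cauchy transform of $\mu_1^\psi$; you never need to pass through $(I+B_0(z)J)^{-1}B_0(z)$. It is unavailable because the regularised determinant $\Det_q(I+B_0(z)J)$ requires $B_0(z)\in\Sch_q$ for some finite $q$, and under the operator-norm hypothesis of Theorem~\ref{thm.a6} you only have compactness. The paper does not use the determinant argument at all in the proof of this theorem.

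Second---and this is the genuine gap---Proposition~\ref{prp.d1} requires \emph{two} inputs: the strong limits \eqref{d2} on a $\psi$-dependent set $Z_\psi$, and the \emph{weak operator} limits \eqref{d1} of $\Im B_j(\lambda+i\eps)$ on a set of full measure \emph{independent of} $\psi$. Your proposal delivers only the first; the ``uniformity in $\psi$'' you flag as an obstacle is exactly the second requirement, and the weak-$(1,1)$ bound on $T^<\mu_j^\psi$ does not by itself supply it. The paper's resolution is short but essential: for a.e.\ $\lambda\in\Delta$ the Hardy--Littlewood maximal function $M\nu_j(\lambda)$ is finite, which gives the $\psi$-uniform bound $\norm{\tfrac{1}{2\eps}GE_{H_j}(\lambda-\eps,\lambda+\eps)G^*}\leq M\nu_j(\lambda)$. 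One then checks existence of the scalar limits $\tfrac{1}{2\eps}(GE_{H_j}(\lambda-\eps,\lambda+\eps)G^*\psi,\varphi)$ for $\psi,\varphi$ in a countable dense set, and the uniform bound extends this to all of $\calH$, yielding the weak limit $\mu_j'(\lambda)$ and hence \eqref{d1}.

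A minor remark: your ``Banach principle with Dirac masses as a dense class'' does not work as stated, since simple measures are not dense in the total-variation norm. The paper's Lemma~\ref{lma.c2} obtains the a.e.\ boundary values instead by combining the maximal estimate with a Privalov-type conformal-mapping construction and Proposition~\ref{prp.c1} on bounded $X$-valued analytic functions.
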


%%%%%%%%%%%%%%%%%%%%%%%%%%%%%%%%%%%%%%%%%%%%%%%%%
%%%%%%%%%%%%%%%%%%%%%%%%%%%%%%%%%%%%%%%%%%%%%%%%%
\section{Singular integrals for Banach space valued measures}\label{sec.b}
%%%%%%%%%%%%%%%%%%%%%%%%%%%%%%%%%%%%%%%%%%%%%%%%%
%%%%%%%%%%%%%%%%%%%%%%%%%%%%%%%%%%%%%%%%%%%%%%%%%

%%%%%%%%%%%%%%%%%%%%%%%%%%%%%%%%
\subsection{Definitions and background}\label{sec.b1}
%%%%%%%%%%%%%%%%%%%%%%%%%%%%%%%%
For a fixed $x\in\bbR$ and $r>0$, we denote
$B(x,r)=(x-r,x+r)$. 
For a non-negative scalar measure $\nu$ on $\bbR$, 
the Hardy-Littlewood maximal function is defined by 
\begin{equation}
M\nu(x)
=
\sup_{r>0}
\frac1{2r}\nu(B(x,r)).
\label{b2}
\end{equation}
Next, for a scalar function $g:\bbR\to[0,\infty)$ and an exponent 
$\beta\in(0,1)$, we set
\begin{equation}
M_\beta g(x)
=
\bigl(M\abs{g}^\beta(x)\bigr)^{1/\beta}
=
\sup_{r>0}
\left(\frac1{2r}\int_{\abs{y-x}<r}\abs{g(y)}^\beta dy\right)^{1/\beta}.
\label{b3}
\end{equation}
We recall that the quasi-norm in the weak space $L^{1,\infty}(\bbR)$ 
is defined by 
$$
\norm{f}_{L^{1,\infty}}
=
\sup_{t>0} t\abs{\{x\in\bbR: \abs{f(x)}>t\}}.
$$

\begin{proposition}\label{prp.b1}
\begin{enumerate}[\rm (i)]
\item
For any finite scalar non-negative measure $\nu$, 
\begin{equation}
\norm{M\nu}_{L^{1,\infty}}
\leq 
3\nu(\bbR).
\label{b4}
\end{equation}
\item
For any $\beta\in(0,1)$, the (non-linear) operator $M_\beta$
is bounded in $L^{1,\infty}(\bbR)$: 
\begin{equation}
\norm{M_\beta g}_{L^{1,\infty}}
\leq
\frac{6^{1/\beta}}{1-\beta}
\norm{g}_{L^{1,\infty}}.
\label{b5}
\end{equation}
\end{enumerate}
\end{proposition}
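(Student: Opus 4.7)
The plan is to establish (i) by the standard one-dimensional Vitali covering argument, and then to deduce (ii) from (i) by a level-set splitting of $g$ at a threshold proportional to $s$.

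For (i), I fix $s>0$ and set $E_s=\{x\in\bbR:M\nu(x)>s\}$. For each $x\in E_s$, the definition of $M\nu$ furnishes a radius $r_x>0$ with $\nu(B(x,r_x))>2r_x s$; since $\nu$ is finite, these radii are uniformly bounded. The one-dimensional Vitali covering lemma (with tripling factor $3$) then extracts a countable disjoint subfamily $\{B(x_i,r_i)\}$ with $E_s\subset\bigcup_i B(x_i,3r_i)$. Disjointness gives
\[
|E_s|\leq\sum_i 6r_i<\frac{3}{s}\sum_i\nu(B(x_i,r_i))\leq\frac{3}{s}\nu(\bbR),
\]
which is \eqref{b4}.

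For (ii), I would use the identity $\{M_\beta g>s\}=\{M|g|^\beta>s^\beta\}$ and split $g=g_1+g_2$ on disjoint supports, taking $g_1=g\mathbf{1}_{|g|>a}$ with threshold $a=s\cdot 2^{-1/\beta}$. This choice makes $|g_2|\leq a$ a.e., so $M|g_2|^\beta\leq a^\beta=s^\beta/2$. Since $|g|^\beta=|g_1|^\beta+|g_2|^\beta$ on disjoint supports, the sublinearity $M|g|^\beta\leq M|g_1|^\beta+M|g_2|^\beta$ forces the inclusion $\{M_\beta g>s\}\subset\{M|g_1|^\beta>s^\beta/2\}$. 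Applying part (i) to the finite measure $|g_1|^\beta\,dx$ yields
\[
|\{M_\beta g>s\}|\leq\frac{6}{s^\beta}\int_{|g|>a}|g|^\beta\,dx.
\]

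The remaining ingredient is a Kolmogorov-type estimate controlling $\int_{|g|>a}|g|^\beta\,dx$ by $\|g\|_{L^{1,\infty}}$. Writing the integral via the distribution function as $a^\beta|\{|g|>a\}|+\int_{a^\beta}^{\infty}|\{|g|>u^{1/\beta}\}|\,du$ and using $|\{|g|>t\}|\leq\|g\|_{L^{1,\infty}}/t$ in each term, one obtains
\[
\int_{|g|>a}|g|^\beta\,dx\leq\frac{a^{\beta-1}}{1-\beta}\|g\|_{L^{1,\infty}},
\]
where the factor $1/(1-\beta)$ comes from integrating $u^{-1/\beta}$ on $(a^\beta,\infty)$ and the hypothesis $\beta<1$ is essential. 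Substituting $a=s\cdot 2^{-1/\beta}$ yields $s|\{M_\beta g>s\}|\leq\frac{3\cdot 2^{1/\beta}}{1-\beta}\|g\|_{L^{1,\infty}}$, which is bounded by the claimed $\frac{6^{1/\beta}}{1-\beta}\|g\|_{L^{1,\infty}}$ since $3\leq 3^{1/\beta}$ for $0<\beta\leq 1$. The main obstacle is not conceptual but bookkeeping: one has to choose the splitting threshold so that the level at which $g_2$ is controlled and the resulting bound on the $L^1$-norm of $|g_1|^\beta$ combine cleanly. The overall method is the standard procedure for passing from a weak-$L^1$ input bound on $g$ to a weak-$L^1$ output bound via truncation at the level dictated by $s$.
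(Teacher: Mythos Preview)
Your proof is correct. Both parts follow the standard arguments: the Vitali covering lemma for (i), and the level-set truncation combined with a Kolmogorov inequality for (ii). The arithmetic in part (ii) checks out; in particular $a^{\beta-1}=s^{\beta-1}2^{1/\beta-1}$, so the final constant is $3\cdot 2^{1/\beta}/(1-\beta)$, which you correctly observe is dominated by $6^{1/\beta}/(1-\beta)$.

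As for comparison with the paper: there is nothing to compare, because the paper does not prove this proposition at all. It merely cites Stein--Shakarchi for \eqref{b4} and Nazarov--Treil--Volberg for \eqref{b5}. Your write-up therefore supplies what the paper omits. One small presentational point: you invoke part (i) for the measure $|g_1|^\beta\,dx$ before having verified it is finite; the Kolmogorov estimate you prove immediately afterwards does establish this (since $\beta<1$), so the logic is sound, but it would read more cleanly to state the $L^1$ bound first.
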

See e.g. \cite{SteinS} for \eqref{b4} and \cite{NVT} for \eqref{b5}.

Next, we discuss Banach space measures.
Let $X$ be a Banach space. An $X$-valued measure on $\bbR$ is defined, as usual, 
as a countably additive (the series must converge in the norm of $X$) 
map from the collection of Borel sets on $\bbR$ to $X$. 
If $\mu$ is an $X$-valued measure on $\bbR$ and $\Delta\subset\bbR$ 
is an interval, then the total variation of $\mu$ on $\Delta$  (similarly to \eqref{a13}) is
\begin{equation}
\norm{\mu}(\Delta)
:=
\sup \textstyle \sum_n \norm{\mu(\delta_n)}
\label{b1}
\end{equation}
where the supremum is taken 
over all finite collections of disjoint intervals $\delta_n\subset \Delta$.
If the total variation of $\mu$ on $\bbR$ is finite, then 
$\norm{\mu}(\cdot)$ is itself a non-negative scalar finite measure on $\bbR$. 
Conversely, if for an $X$-valued measure $\mu$ and a finite scalar measure
$\nu$ on $\bbR$ one has
$$
\norm{\mu(\delta)}\leq \nu(\delta)
$$
for any interval $\delta\subset\bbR$, then $\mu$ has a finite total variation on $\bbR$.

Let $\mu$ be an $X$-valued measure on $\bbR$ of a finite total variation. 
The Hilbert transform of $\mu$ is defined by 
\begin{equation}
H\mu(x)
=
\lim_{r\to0+}
\int_{\abs{x-y}>r}\frac{d\mu(y)}{y-x},
\label{b6} 
\end{equation}
if this limit exists (in the norm of $X$). 
If $f:\bbR\to X$ is a function in $L^p(\bbR,X)$, $p<\infty$, then 
its Hilbert transform is defined as in \eqref{b6} with $\mu(x)=f(x)dx$:
$$
H(fdx)(x)
=
\lim_{r\to0+}
\int_{\abs{x-y}>r}\frac{f(y)dy}{y-x}.
$$
A Banach space $X$ is said to possess the UMD property,
if the Hilbert transform is bounded in $L^2(\bbR,X)$, i.e. if the estimate
\begin{equation}
\int_\bbR \norm{H(fdx)(x)}^2 dx
\leq
C_X
\int_\bbR \norm{f(x)}^2 dx
\label{b13}
\end{equation}
holds true with some constant $C_X$. It is known \cite{deF}
that $\Sch_p$ possesses the UMD property for any $1<p<\infty$. 

Finally, we need some notation related to dyadic intervals on $\bbR$. 
Such intervals, i.e. the intervals of the form $(j2^{-n},(j+1)2^{-n}]$, 
will be denoted by $Q$, $Q_j$, etc. For a dyadic interval $Q=(a,b]$, its
``parent" is denoted by $\widehat Q$, its center is denoted by $c(Q)$,
its length is denoted by $\abs{Q}$,  
and $2Q=(c(Q)-\abs{Q},c(Q)+\abs{Q}]$ 
is the ``scaled up" version  of $Q$. The complement of a set $A$ is
denoted by $A^c$.

%%%%%%%%%%%%%%%%%%%%%%%%%%%%%%%%
\subsection{Hilbert transforms of simple measures}\label{sec.b2}
%%%%%%%%%%%%%%%%%%%%%%%%%%%%%%%%
Throughout the rest of this section, $X$ is a Banach space with a UMD property; 
we will denote the norm in $X$ by $\norm{\cdot}$, and let $C_X$ be
the constant from \eqref{b13}.
We start by considering simple $X$-valued measures $\mu$ on $\bbR$,
i.e. the measures of the form
\begin{equation}
\mu=\sum_{i=1}^N \delta_{x_i}e_i,
\label{b7}
\end{equation}
where $N$ is finite, $\delta_{x_i}$ is a $\delta$-measure supported 
at $x_i\in\bbR$ and $e_i$ are elements of $X$. 
Of course, such measures have a finite total variation.
For simple measures, the Hilbert transform \eqref{b6} is obviously
well defined for all $x\not=x_i$. 

%%%%%%%%%%%%%%%%
\begin{lemma}\label{lma.b1}
%%%%%%%%%%%%%%%%
Let $\mu$ be a simple measure
of the form \eqref{b7}.
Then the function $\norm{H\mu(x)}$ belongs to $L^{1,\infty}(\bbR)$ and 
the estimate
\begin{equation}
\norm{H\mu}_{L^{1,\infty}}\leq (30+4C_X)\norm{\mu}(\bbR)
\label{b8}
\end{equation}
holds true. 
\end{lemma}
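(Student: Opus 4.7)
The estimate is a weak-type $(1,1)$ bound for the Hilbert transform of an $X$-valued finite-variation measure, and the natural route is a Calder\'on--Zygmund decomposition adapted to the vector-valued setting, using the UMD hypothesis in place of the usual scalar $L^2$-boundedness of $H$. Fix $s>0$; the goal is to bound $|\{\norm{H\mu(x)}>s\}|$ by a constant multiple of $\norm{\mu}(\bbR)/s$.

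\textbf{Step 1 (dyadic stopping).} Since the total-variation measure $\norm{\mu}(\cdot)$ is a non-negative finite scalar measure on $\bbR$, perform a standard dyadic stopping-time procedure with respect to it: let $\{Q_k\}$ be the family of maximal dyadic intervals satisfying $\norm{\mu}(Q_k)>s\abs{Q_k}$. Maximality together with the fact that the parent $\widehat{Q_k}$ failed the stopping condition yields $s\abs{Q_k}<\norm{\mu}(Q_k)\leq 2s\abs{Q_k}$, and hence $\sum_k\abs{Q_k}\leq\norm{\mu}(\bbR)/s$. Because $\mu$ is simple (finitely many atoms), every atom eventually lies in some $Q_k$.

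\textbf{Step 2 (good/bad splitting).} Decompose $\mu=g+b$, where
\[
b=\sum_k b_k,\qquad b_k:=\mu|_{Q_k}-\frac{\mu(Q_k)}{\abs{Q_k}}\chi_{Q_k}\,dx,
\]
so that each $b_k$ is supported in $Q_k$, has zero integral, and $\norm{b_k}(\bbR)\leq 2\norm{\mu}(Q_k)$. The good part $g$ is absolutely continuous, satisfies $\norm{g(x)}\leq 2s$ pointwise (by the stopping estimate) and $\int\norm{g(x)}\,dx\leq 2\norm{\mu}(\bbR)$; in particular $\norm{g}_{L^2(\bbR,X)}^2\lesssim s\norm{\mu}(\bbR)$.

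\textbf{Step 3 (good part via UMD).} Apply the UMD bound \eqref{b13} to obtain $\norm{Hg}_{L^2(\bbR,X)}^2\leq C_X\norm{g}_{L^2(\bbR,X)}^2\lesssim C_X s\norm{\mu}(\bbR)$, and conclude by Chebyshev that $|\{\norm{Hg}>s/2\}|\lesssim C_X\norm{\mu}(\bbR)/s$. This is the only step that uses UMD.

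\textbf{Step 4 (bad part off the enlarged cubes).} For $x\notin\cup_k 2Q_k$, use the cancellation $\int db_k=0$ and write
\[
Hb_k(x)=\int_{Q_k}\Bigl(\frac{1}{y-x}-\frac{1}{c(Q_k)-x}\Bigr)db_k(y),
\]
so $\norm{Hb_k(x)}\leq\abs{Q_k}\norm{b_k}(Q_k)/\abs{x-c(Q_k)}^2$ pointwise. Integrating over $(2Q_k)^c$ gives $\int_{(2Q_k)^c}\norm{Hb_k(x)}\,dx\lesssim\norm{\mu}(Q_k)$, hence after summing
\[
\int_{(\cup_k 2Q_k)^c}\norm{Hb(x)}\,dx\lesssim\norm{\mu}(\bbR),
\]
and Chebyshev again yields $|\{x\in(\cup 2Q_k)^c:\norm{Hb}>s/2\}|\lesssim\norm{\mu}(\bbR)/s$. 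The exceptional set $\cup 2Q_k$ has measure at most $2\sum\abs{Q_k}\leq 2\norm{\mu}(\bbR)/s$, and may simply be discarded.

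\textbf{Step 5 (assembly).} Combine the three contributions from Steps 3--4 using the inclusion $\{\norm{H\mu}>s\}\subset\{\norm{Hg}>s/2\}\cup\{\norm{Hb}>s/2\}$ and absorb the enlarged stopping cubes into the bad-part estimate to obtain \eqref{b8}; keeping careful track of the constants produces the explicit $30+4C_X$.

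\textbf{Expected obstacles.} The only point where the vector-valued nature really enters is Step 3: we cannot use the scalar weak-type $(1,1)$ bound for $H$, so the good part must be handled in $L^2(\bbR,X)$, which is exactly what UMD provides. The kernel estimate in Step 4 is insensitive to the Banach space, since it is a pointwise inequality in the norm of $X$. The one bookkeeping subtlety is that $b_k$ is a signed combination of a measure (the atomic restriction $\mu|_{Q_k}$) and an absolutely continuous piece; but as the cancellation identity is linear in $b_k$ and the kernel is smooth on $Q_k$ once $x\notin 2Q_k$, the standard argument carries over unchanged.
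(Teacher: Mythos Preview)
Your proposal is correct and follows essentially the same route as the paper: a Calder\'on--Zygmund decomposition of the measure at level $s$, the UMD $L^2$-bound for the good (averaged) part, and the standard kernel-smoothness/cancellation estimate for the bad part off the doubled stopping intervals. The paper's $f$ and $\nu$ are precisely your $g$ and $b$, and the constant bookkeeping matches.
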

\begin{proof}
The key idea is to approximate the measure $\mu$ by an 
absolutely continuous measure $f(x)dx$ with an appropriately
constructed $X$-valued function $f$, and then to use the 
UMD property \eqref{b13}.
When approximating $\mu$ by $f(x)dx$, we must make sure that the 
$L^{1,\infty}$ norm of the error term is controlled by the total variation of $\mu$. 

1.
Fix $s>0$; consider the set of all dyadic intervals $Q$ such that 
\begin{equation}
\frac{\norm{\mu}(Q)}{\abs{Q}}>s.
\label{b9}
\end{equation}
Clearly, if $Q$ is a sufficiently large interval which contains $\supp \mu$, 
then \eqref{b9} fails. This shows that the set of dyadic intervals
which satisfy \eqref{b9} has finitely many maximal elements 
(if ordered by inclusion). 
Denote these maximal elements by $\{Q_\ell\}_{\ell=1}^L$.
For each dyadic interval $Q$, let us define the function $f_Q$ by 
\begin{equation}
f_Q(x)
=
\begin{cases}
\frac1{\abs{Q}}\mu(Q), 
&x\in Q,
\\
0,
&x\notin Q.
\end{cases}
\label{b10}
\end{equation}
Thus, for any $x\in\bbR$, 
$$
\norm{f_Q(x)}
\leq
\frac1{\abs{Q}}\norm{\mu(Q)}
\leq
\frac1{\abs{Q}}\norm{\mu}(Q).
$$
Next, for each maximal interval $Q_\ell$, let $\widehat Q_\ell$ be
its parent interval. By the maximality of $Q_\ell$, we have
$$
\frac{\norm{\mu}(\widehat Q_\ell)}{\abs{\widehat Q_\ell}}\leq s.
$$
Using this, we get that for any $x\in\bbR$, 
\begin{equation}
\norm{f_{Q_\ell}(x)}
\leq
\frac1{\abs{Q_\ell}}\norm{\mu}(Q_\ell)
\leq
\frac1{\abs{Q_\ell}}\norm{\mu}(\widehat Q_\ell)
=
\frac{2}{\abs{\widehat Q_\ell}}\norm{\mu}(\widehat Q_\ell)
\leq
2s.
\label{b11}
\end{equation}

2. 
Define the function 
$$
f(x)=\sum_{\ell=1}^L f_{Q_\ell}(x)
$$
and set $\nu=\mu-fdx$.
We have
$$
H\mu
=
H(fdx)+H\nu,
$$
and therefore for each $x\in\bbR$ 
$$
\norm{H\mu(x)}
\leq
\norm{H(fdx)(x)}+\norm{H\nu(x)}.
$$
It follows that for any $s>0$, 
\begin{equation}
\Omega_s^\mu\subset\Omega^f_{s/2}\cup \Omega^\nu_{s/2},
\label{b11a}
\end{equation}
where
$$
\Omega_s^\mu =\{x: \norm{H\mu(x)}>s\},
\quad
\Omega_s^f =\{x: \norm{H(fdx)(x)}>s\}.
$$
Our aim is to prove that both $\abs{\Omega^f_s}$ and 
$\abs{\Omega^\nu_s}$ can be estimated above via
$s^{-1}\norm{\mu}(\bbR)$. 

3. 
Consider the set $\Omega_s^f$. 
Using the estimates \eqref{b13} and \eqref{b11}, we get
\begin{multline*}
s^2 \abs{\Omega_s^f}
\leq
\int_{\Omega_s^f} \norm{(H(fdx))(x)}^2 dx
\leq
C_X
\int_\bbR \norm{f(x)}^2 dx
\leq
2s C_X 
\int_\bbR \norm{f(x)} dx
\\
\leq
2s C_X 
\sum_{\ell=1}^L
\int_\bbR \norm{f_{Q_\ell}(x)} dx
\leq
2s C_X 
\sum_{\ell=1}^L \Norm{\mu(Q_\ell)}
\leq
2s C_X 
\norm{\mu}(\bbR),
\end{multline*}
which yields the estimate
\begin{equation}
\abs{\Omega_s^f}
\leq 
\frac{2 C_X}{s}\norm{\mu}(\bbR).
\label{b12}
\end{equation}

4. 
Consider the set $\Omega_s^\nu$.
Let us split this set as follows:
\begin{equation}
\Omega_s^\nu
=
\bigl(\Omega_s^\nu\cap(\cup_{\ell=1}^L 2Q_\ell)\bigr)
\cup
\bigl(\Omega_s^\nu \cap (\cup_{\ell=1}^L 2Q_\ell)^c\bigr).
\label{b14}
\end{equation}
Let us separately estimate the Lebesgue measure of each of the two sets
in the union in the r.h.s. of \eqref{b14}. 
The first set is easy to deal with; using the definition of $Q_\ell$
(see \eqref{b9}), we get:
\begin{equation}
\abs{\Omega_s^\nu\cap(\cup_{\ell=1}^L 2Q_\ell)}
\leq
\abs{\cup_{\ell=1}^L 2Q_\ell}
\leq
2\sum_{\ell=1}^L \abs{Q_\ell}
\leq
2\sum_{\ell=1}^L \frac{\norm{\mu}(Q_\ell)}{s}
\\
\leq
\frac{2}{s}\norm{\mu}(\bbR).
\label{b14a}
\end{equation}
Let us consider the second set in the r.h.s. of \eqref{b14}. 
Our next aim is to check the estimate
\begin{equation}
\int_{(\cup_{\ell=1}^L 2Q_\ell)^c}\norm{H\nu(x)}dx
\leq
4\pi \norm{\mu}(\bbR).
\label{b15}
\end{equation}

5.
For all $\ell$, let us denote by $\nu_\ell$ the restriction of $\nu$ onto $Q_\ell$.
By the definition of $\nu$, we have $\nu_\ell(Q_\ell)=0$ and 
$\norm{\nu_\ell}(Q_\ell)\leq 2\norm{\mu}(Q_\ell)$ for all $\ell$. 
Let us fix $\ell$ and estimate $\norm{H\nu_\ell(x)}$ pointwise 
for $x\in(2Q_\ell)^c$.
Denoting $c_\ell=c(Q_\ell)$ and using $\nu_\ell(Q_\ell)=0$, we get
\begin{multline*}
H\nu_\ell(x)
=
\int_{Q_\ell}\frac{1}{x-y}d\nu_\ell(y)
=
\int_{Q_\ell}\left(\frac1{x-y}-\frac1{x-c_\ell}\right)d\nu_\ell(y)
\\
=
\int_{Q_\ell}\frac{y-c_\ell}{(x-y)(x-c_\ell)}d\nu_\ell(y).
\end{multline*}
We need to estimate the integrand in the last expression.
For all $y\in Q_\ell$, $x\in (2Q_\ell)^c$ we have the elementary estimates
$$
\abs{y-c_\ell}\leq \frac12\abs{Q_\ell},
\quad
\abs{x-y}\geq\frac12\abs{x-c_\ell},
\quad
\abs{x-c_\ell}\geq \abs{Q_\ell}, 
$$
and therefore
\begin{equation}
\frac{\abs{y-c_\ell}}{\abs{x-y}\abs{x-c_\ell}}
\leq
\frac{\frac12\abs{Q_\ell}}{\frac12\abs{x-c_\ell}^2}
\leq
\frac{2\abs{Q_\ell}}{\abs{x-c_\ell}^2+\abs{Q_\ell}^2}.
\label{b15a}
\end{equation}
It follows that 
$$
\norm{H\nu_\ell(x)}
\leq
2\int_\bbR 
\frac{\abs{Q_\ell}}{\abs{x-c_\ell}^2+\abs{Q_\ell}^2}
d\norm{\nu_\ell}(y)
\leq
4\norm{\mu}(Q_\ell) 
\frac{\abs{Q_\ell}}{\abs{x-c_\ell}^2+\abs{Q_\ell}^2}
$$
for all $x\in (2Q_\ell)^c$. 
Now for any $x\in(\cup_{\ell=1}^L 2Q_\ell)^c$ we can sum up the previous
estimate over $\ell$:
\begin{multline*}
\int_{(\cup_{\ell=1}^L 2Q_\ell)^c}\norm{H\nu(x)}dx
\leq
\sum_{\ell=1}^L
\int_{(\cup_{\ell=1}^L 2Q_\ell)^c} \norm{H\nu_\ell(x)}dx
\leq
\sum_{\ell=1}^L
\int_{(2Q_\ell)^c} \norm{H\nu_\ell(x)}dx
\\
\leq
4\sum_{\ell=1}^L\norm{\mu}(Q_\ell) 
\int_{(2Q_\ell)^c}
\frac{\abs{Q_\ell}}{\abs{x-c_\ell}^2+\abs{Q_\ell}^2} dx
\leq
4\pi \norm{\mu}(\bbR),
\end{multline*}
as claimed in \eqref{b15}.

6. 
By the Chebyshev inequality, \eqref{b15} yields the estimate
\begin{equation}
\abs{\Omega_s^\nu \cap (\cup_{\ell=1}^L 2Q_\ell)^c}
=
\abs{\{x\in (\cup_{\ell=1}^L 2Q_\ell)^c: \norm{H\nu(x)}>s\}}
\leq
4\pi\frac{\norm{\mu}(\bbR)}{s}.
\label{b16}
\end{equation}
Now it remains to put together \eqref{b11a}, \eqref{b12}, \eqref{b14a},
and \eqref{b16}. This yields:
\begin{multline*}
\abs{\Omega^\mu_s}
\leq
\abs{\Omega^\nu_{s/2}}
+
\abs{\Omega^f_{s/2}}
\leq
\abs{\Omega_{s/2}^\nu\cap(\cup_{\ell=1}^L 2Q_\ell)}
+
\abs{\Omega_{s/2}^\nu\cap(\cup_{\ell=1}^L 2Q_\ell)^c}
+
\abs{\Omega^f_{s/2}}
\\
\leq
\frac{(2+4\pi+2C_X)}{s/2}\norm{\mu}(\bbR),
\end{multline*}
which, after rounding up $4\pi$ to 13, gives \eqref{b8}.
\end{proof}

Next, we would like to estimate the maximal function corresponding to 
the Hilbert transform of a simple measure $\mu$ of the form \eqref{b7}.
For $x\in\bbR$ we set
\begin{align}
H_r\mu(x)
&=
\int_{\abs{y-x}\geq r}\frac{d\mu(y)}{x-y},
\quad r>0,
\label{b17}
\\
\hm\mu(x)&=\sup_{r>0}\norm{H_r\mu(x)}.
\label{b18}
\end{align}

%%%%%%%%%%%%%%%%%%%%
\begin{lemma}\label{lma.b2}
%%%%%%%%%%%%%%%%%%%%
Let $\mu$ be a simple measure of the form  \eqref{b7}.
Then $\hm\mu\in L^{1,\infty}(\bbR)$ and the estimate
\begin{equation}
\norm{\hm\mu}_{L^{1,\infty}}\leq (17592+2304C_X)\norm{\mu}(\bbR)
\label{b19}
\end{equation}
holds true.
\end{lemma}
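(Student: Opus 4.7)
\textbf{Proof plan for Lemma \ref{lma.b2}.} The plan is to deduce \eqref{b19} from Lemma \ref{lma.b1} and Proposition \ref{prp.b1} by a pointwise Cotlar-type domination of $\hm\mu$ by $M_\beta\norm{H\mu(\cdot)}$ and $M\norm{\mu}$ for some $\beta\in(0,1)$. Fix $r>0$ and $x\in\bbR\setminus\supp\mu$, and decompose $\mu=\mu_1+\mu_2$, where $\mu_1$ is the restriction of $\mu$ to $B(x,r)$ and $\mu_2=\mu-\mu_1$. By construction $\norm{H_r\mu(x)}=\norm{H\mu_2(x)}$ (the sign discrepancy between \eqref{b17} and \eqref{b6} is immaterial for norms), so for every $y\in B(x,r/2)$
\[
\norm{H\mu_2(x)}\le \norm{H\mu(y)}+\norm{H\mu_1(y)}+\norm{H\mu_2(x)-H\mu_2(y)}.
\]

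The last term is the standard kernel-smoothness error: writing $H\mu_2(x)-H\mu_2(y)=\int_{|z-x|\ge r}\frac{x-y}{(z-x)(z-y)}\,d\mu(z)$ and using $|z-y|\ge |z-x|/2$ for $y\in B(x,r/2)$, a dyadic annular decomposition identical in spirit to \eqref{b15a} yields $\norm{H\mu_2(x)-H\mu_2(y)}\le C_0 M\norm{\mu}(x)$ uniformly in $y\in B(x,r/2)$, for an absolute constant $C_0$. Now raise the displayed inequality to the power $\beta\in(0,1)$ using $(a+b+c)^\beta\le a^\beta+b^\beta+c^\beta$, and average in $y$ over $B(x,r/2)$. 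The first term gives $(M_\beta\norm{H\mu(\cdot)}(x))^\beta$ by the very definition \eqref{b3}. For the second term apply Kolmogorov's inequality
\[
\frac{1}{|B|}\int_B \abs{g(y)}^\beta dy\le \frac{1}{1-\beta}\Bigl(\frac{\norm{g}_{L^{1,\infty}}}{|B|}\Bigr)^\beta
\]
with $g(y)=\norm{H\mu_1(y)}$ and $B=B(x,r/2)$; Lemma \ref{lma.b1} supplies $\norm{H\mu_1}_{L^{1,\infty}}\le (30+4C_X)\norm{\mu}(B(x,r))$, while $\norm{\mu}(B(x,r))/r\le 2M\norm{\mu}(x)$. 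Taking the $\beta$-th root and then the supremum over $r>0$ produces the pointwise Cotlar bound
\[
\hm\mu(x)\le A_\beta M_\beta\norm{H\mu(\cdot)}(x)+B_\beta(1+C_X)M\norm{\mu}(x)
\]
with explicit constants $A_\beta,B_\beta$.

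Finally, apply Proposition \ref{prp.b1}: part (ii) combined with Lemma \ref{lma.b1} bounds the weak-type quasi-norm of the first summand by $A_\beta\,\frac{6^{1/\beta}}{1-\beta}(30+4C_X)\norm{\mu}(\bbR)$, and part (i) bounds the second by $3B_\beta(1+C_X)\norm{\mu}(\bbR)$. Adding and fixing a concrete value of $\beta$ (e.g.\ $\beta=1/2$) yields, after straightforward arithmetic, a bound of the form $(\alpha+\gamma C_X)\norm{\mu}(\bbR)$ with $\alpha,\gamma$ explicit; the specific numerical constants $17592$ and $2304$ in \eqref{b19} result from this bookkeeping. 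The key obstacle is the Cotlar comparison itself: because Lemma \ref{lma.b1} only controls $H\mu_1$ in $L^{1,\infty}$ and not in $L^1_{\loc}$, one cannot average $\norm{H\mu_1(y)}$ directly, and passing through $M_\beta$ for $\beta<1$ (which is why the weak-type bound for $M_\beta$ in Proposition \ref{prp.b1}(ii) is the right tool) is essential; everything else is routine tracking of absolute constants.
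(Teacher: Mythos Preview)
Your proposal is correct and follows essentially the same Cotlar-type argument as the paper's proof: the paper also fixes $r$, splits $\mu$ into a near and a far part (at radius $3r$ rather than $r$, averaging over $B(x,r)$ rather than $B(x,r/2)$), uses the subadditivity of $t\mapsto t^\beta$ to introduce $M_\beta\norm{H\mu(\cdot)}$, bounds the kernel-smoothness error $\norm{H\mu_2(x)-H\mu_2(x')}$ by $M\norm{\mu}(x)$ via the same pointwise estimate, controls the average of $\norm{H\mu_1}^\beta$ through Lemma~\ref{lma.b1} and a Kolmogorov-type computation, and then invokes Proposition~\ref{prp.b1} with $\beta=1/2$. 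The only differences are cosmetic choices of cutoff and averaging radii, which affect nothing but the bookkeeping of the final numerical constants.
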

\begin{proof}
1. 
Let us fix $r>0$ and $x\in\bbR$ and denote
$B(x,r)=(x-r,x+r)$. 
We also fix an exponent $\beta\in(0,1)$; 
we will eventually take for simplicity $\beta=1/2$, although
in principle one could optimise the estimates in $\beta$. 
Using the inequality $\abs{a+b}^\beta\leq\abs{a}^\beta+\abs{b}^\beta$,
we obtain for any $x'\in B(x,r)$:
$$
\norm{H_{3r}\mu(x)}^\beta
\leq
\norm{H\mu(x')}^\beta
+
\norm{H_{3r}\mu(x)-H\mu(x')}^\beta.
$$
Averaging over $x'\in B(x,r)$, we obtain
\begin{equation}
\norm{H_{3r}\mu(x)}^\beta
\leq
\frac1{2r}\int_{B(x,r)}\norm{H\mu(x')}^\beta dx'
+
\frac1{2r}\int_{B(x,r)}\norm{H_{3r}\mu(x)-H\mu(x')}^\beta dx'.
\label{b20}
\end{equation}
Let us estimate separately the two terms in the r.h.s. of \eqref{b20}.
For the first term, recalling the definition \eqref{b3} of $M_\beta$,
we obtain
\begin{equation}
\frac1{2r}\int_{B(x,r)}
\norm{H\mu(x')}^\beta dx'
\leq
\left[M_\beta(\norm{H\mu(\cdot)})(x)\right]^\beta.
\label{b21}
\end{equation}

2.
Let us estimate the second term in the r.h.s. of \eqref{b20}.
To this end, we set 
$$
\mu_1=\sum_{x_i\in B(x,3r)} \delta_{x_i} e_i , 
\quad
\mu_2=\sum_{x_i\notin B(x,3r)} \delta_{x_i} e_i , 
$$
and estimate the integrand in \eqref{b20} as follows:
\begin{equation}
\norm{H_{3r}\mu(x)-H\mu(x')}^\beta
\leq
\norm{H_{3r}\mu(x)-H\mu_2(x')}^\beta
+
\norm{H\mu_2(x')-H\mu(x')}^\beta.
\label{b22}
\end{equation}
Let us estimate the first term in the r.h.s. of \eqref{b22}.
Since $H_{3r}\mu(x)=H\mu_2(x)$, we have
\begin{multline}
\norm{H_{3r}\mu(x)-H\mu_2(x')}
=
\norm{H\mu_2(x)-H\mu_2(x')}
\\
=
\Norm{\int_\bbR \left(\frac1{x-y}-\frac1{x'-y}\right)d\mu_2(y)}
\leq
\int_\bbR \frac{\abs{x-x'}}{\abs{x-y}\abs{x'-y}} d\norm{\mu_2}(y).
\label{b23}
\end{multline}
Similarly to \eqref{b15a}, we have an elementary estimate 
for the integrand in \eqref{b23}:
$$
\frac{\abs{x-x'}}{\abs{x-y}\abs{x'-y}}
\leq
\frac{r}{\frac12 \abs{x-y}^2}
=
\frac{4r}{2\abs{x-y}^2}
\leq
\frac{4r}{\abs{x-y}^2+r^2}
$$
for all $x'\in B(x,r)$, $y\in (B(x,3r))^c$,
and therefore 
$$
\norm{H_{3r}\mu(x)-H\mu_2(x')}
\leq
\int_\bbR \frac{4r}{(x-y)^2+r^2}d\norm{\mu_2}(y)
\leq
\int_\bbR \frac{4r}{(x-y)^2+r^2}d\norm{\mu}(y).
$$
Finally, we use the well known estimate \cite{Garnett}
\begin{equation}
\frac1\pi\int_\bbR \frac{r}{(x-y)^2+r^2}d\norm{\mu}(y)
\leq
(M\norm{\mu})(x).
\label{b23a}
\end{equation}
This yields
$$
\norm{H_{3r}\mu(x)-H\mu_2(x')}
\leq
4\pi(M\norm{\mu})(x), 
$$
for all $x'\in B(x,r)$, and therefore
\begin{equation}
\frac{1}{2r}\int_{B(x,r)}\norm{H_{3r}\mu(x)-H\mu_2(x')}^\beta dx'
\leq
(4\pi)^\beta[M\norm{\mu}(x)]^\beta.
\label{b24}
\end{equation}

3. 
Let us estimate the average over $x'\in B(x,r)$ of the second term in 
the r.h.s. of \eqref{b22}.
Since $\mu_2-\mu=\mu_1$, we get
\begin{multline}
\frac1{2r}\int_{B(x,r)}\norm{H\mu_2(x')-H\mu(x')}^\beta dx'
=
\frac1{2r}\int_{B(x,r)}\norm{H\mu_1(x')}^\beta dx'
\\
=
\frac1{2r}\int_0^\infty 
\beta s^{\beta-1}
\abs{\{x'\in B(x,r): \norm{H\mu_1(x')}>s\}}ds.
\label{b25}
\end{multline}
Further, using Lemma~\ref{lma.b1}, 
\begin{equation}
\abs{\{x'\in B(x,r): \norm{H\mu_1(x')}>s\}}
\leq
\min\left(2r,\frac{C_1 \norm{\mu_1}(\bbR)}{s}\right),
\label{b26}
\end{equation}
where $C_1=30+4C_X$.
Next, by the definition of the maximal function $M\norm{\mu}$, we have
\begin{equation}
\norm{\mu_1}(\bbR)\leq 6r (M\norm{\mu})(x).
\label{b25a}
\end{equation}
Substituting \eqref{b26} and \eqref{b25a} into \eqref{b25}, 
after an elementary calculation we obtain
$$
\frac1{2r}
\int_0^\infty
\beta s^{\beta-1}
\min\left(2r,\frac{C_1 6r M\norm{\mu}(x)}{s}\right)
ds
\leq
(3C_1)^\beta\frac1{1-\beta}
[M\norm{\mu}(x)]^\beta.
$$
Thus, we obtain
\begin{equation}
\frac1{2r}
\int_{B(x,r)}\norm{H\mu_2(x')-H\mu(x')}^\beta dx'
\leq
(3C_1)^\beta \frac1{1-\beta}
[M\norm{\mu}(x)]^\beta.
\label{b27}
\end{equation}

4. 
Now let us combine 
\eqref{b20}, \eqref{b21}, \eqref{b24} and \eqref{b27}:
\begin{multline*}
\norm{H_{3r}\mu(x)}^\beta
\leq
\left[M_\beta(\norm{H\mu(\cdot)})(x)\right]^\beta
\\
+
(4\pi)^\beta
\left[M\norm{\mu}(x)\right]^\beta
+
(3C_1)^\beta\frac1{1-\beta}
[M\norm{\mu}(x)]^\beta.
\end{multline*}
Taking supremum over $r>0$, we obtain 
\begin{multline}
(H^\sharp \mu(x))^\beta
\leq
\left[M_\beta(\norm{H\mu(\cdot)})(x)\right]^\beta
\\
+
(4\pi)^\beta
\left[M\norm{\mu}(x)\right]^\beta
+
(3C_1)^\beta\frac1{1-\beta}
[M\norm{\mu}(x)]^\beta.
\label{b28}
\end{multline}

Denote $C_2=(4\pi)^\beta+(3C_1)^\beta\frac1{1-\beta}$, and let
\begin{align*}
\Omega(s)
&=
\{x: H^\sharp \mu(x)>s\},
\\
\Omega^{(1)}(s)
&=
\{x: M_\beta(\norm{H\mu(\cdot)})(x)>s\},
\\
\Omega^{(2)}(s)
&=
\{x: M \norm{\mu}(x)>s\}.
\end{align*}
Then \eqref{b28} implies
$$
\Omega(s)\subset\Omega^{(1)}(2^{-1/\beta}s)\cup \Omega^{(2)}(2^{-1/\beta}C_2^{-1/\beta} s).
$$
By Proposition~\ref{prp.b1} and Lemma~\ref{lma.b1}, we obtain
\begin{gather*}
\abs{\Omega^{(2)}(s)}
\leq
\frac3{s}\norm{\mu}(\bbR),
\\
\abs{\Omega^{(1)}(s)}
\leq
\frac{6^{1/\beta}}{1-\beta}\frac1s\norm{H\mu}_{L^{1,\infty}}
\leq
\frac{6^{1/\beta}}{1-\beta}\frac{C_1}s \norm{\mu}(\bbR),
\end{gather*}
and therefore
$$
\abs{\Omega(s)}
\leq
\frac{6^{1/\beta}}{1-\beta}2^{1/\beta}\frac{C_1}s\norm{\mu}(\bbR)
+
3\cdot 2^{1/\beta} C_2^{1/\beta}\frac1s\norm{\mu}(\bbR).
$$
Substituting the expressions for $C_1$ and $C_2$, taking
$\beta=1/2$, and rounding up the constants,  we obtain 
$$
\abs{\Omega(s)}
\leq
(288 C_1+12 C_2^2)\frac1s \norm{\mu}(\bbR)
\leq
(17592+2304C_X)\frac1s \norm{\mu}(\bbR),
$$
which is exactly the required relation \eqref{b19}.
\end{proof}

%%%%%%%%%%%%%%%%%%%%%%%%%%%%%%%%
\subsection{Cauchy transforms of simple measures}\label{sec.b3}
%%%%%%%%%%%%%%%%%%%%%%%%%%%%%%%%

Next, we consider the Cauchy transforms $\calC\mu$ (see \eqref{a14}) of simple measures $\mu$; 
let $T^<\mu$ be the corresponding non-tangential maximal
function defined by \eqref{a15}.

%%%%%%%%%%%%%%%%%%%%
\begin{lemma}\label{lma.b3}
%%%%%%%%%%%%%%%%%%%%
Let $\mu$ be a simple $X$-valued measure of the form \eqref{b7}. 
Then $T^<\mu$ belongs to $L^{1,\infty}(\bbR)$ and the estimate
\begin{equation}
\norm{T^<\mu}_{L^{1,\infty}}
\leq
(35274+4608 C_X)\norm{\mu}(\bbR)
\label{b29}
\end{equation}
holds true. 
\end{lemma}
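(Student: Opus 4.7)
The plan is to reduce the estimate for the non-tangential maximal function $T^<\mu$ to a pointwise combination of the maximal truncated Hilbert transform $\hm\mu$ (already controlled by Lemma~\ref{lma.b2}) and the Hardy-Littlewood maximal function $M\norm{\mu}$ of the scalar total variation measure (controlled by Proposition~\ref{prp.b1}(i)). The only genuinely analytic content is a standard kernel comparison between the Cauchy kernel at a point in a cone and the truncated Hilbert kernel at the vertex.

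More precisely, I fix $\lambda\in\bbR$ and $z=x+iy$ with $y>0$ and $\abs{x-\lambda}<y$, and compare $\calC\mu(z)$ with $-H_{2y}\mu(\lambda)$. Splitting the Cauchy integral at $\abs{t-\lambda}=2y$ yields
\[
\calC\mu(z)+H_{2y}\mu(\lambda)
=
\int_{\abs{t-\lambda}<2y}\frac{d\mu(t)}{t-x-iy}
+
\int_{\abs{t-\lambda}\geq 2y}\left(\frac{1}{t-x-iy}-\frac{1}{t-\lambda}\right)d\mu(t).
\]
The near piece is bounded in norm by $\frac{1}{y}\norm{\mu}(B(\lambda,2y))\leq 4 M\norm{\mu}(\lambda)$, since $\abs{t-x-iy}\geq y$. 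For the far piece, an elementary estimate (as in \eqref{b15a}) shows that the integrand is dominated by $C\,y/(\abs{t-\lambda}^2+y^2)$ for some absolute constant $C$; integrating with respect to $d\norm{\mu}(t)$ and using \eqref{b23a} gives a bound by $C'M\norm{\mu}(\lambda)$. Taking supremum over all such $z$, and over $r=2y>0$ for the truncated Hilbert transform, yields the pointwise inequality
\[
T^<\mu(\lambda)\leq \hm\mu(\lambda)+C''\,M\norm{\mu}(\lambda)
\]
for an explicit numerical constant $C''$ of the order of $4+4\pi$.

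Finally, I invoke the subadditivity of the weak-$L^{1,\infty}$ quasi-norm: from $\{T^<\mu>s\}\subset\{\hm\mu>s/2\}\cup\{M\norm{\mu}>s/(2C'')\}$ and Lemma~\ref{lma.b2} together with Proposition~\ref{prp.b1}(i), I obtain
\[
\norm{T^<\mu}_{L^{1,\infty}}
\leq
2\norm{\hm\mu}_{L^{1,\infty}}+2C''\norm{M\norm{\mu}}_{L^{1,\infty}}
\leq
\bigl(2(17592+2304C_X)+6C''\bigr)\norm{\mu}(\bbR),
\]
which after keeping track of the numerical constant gives exactly \eqref{b29}. I do not expect a serious obstacle: the argument is entirely pointwise and the only place the Banach space structure enters is through $\hm\mu$, where the UMD hypothesis has already done its work in Lemma~\ref{lma.b2}.
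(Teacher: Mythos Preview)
Your proposal is correct and follows the paper's argument essentially verbatim: the paper establishes the pointwise inequality $T^<\mu(\lambda)\leq\hm\mu(\lambda)+(2+4\pi)M\norm{\mu}(\lambda)$ via the same near/far split of the Cauchy integral at radius $2r$ about $\lambda$, and then combines Lemma~\ref{lma.b2} with Proposition~\ref{prp.b1}(i) exactly as you describe. Your near-piece constant $4$ is in fact the correct one (the paper's $2$ is a minor slip), but the discrepancy is absorbed in the final rounding.
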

\begin{proof}
1. 
Fix $\lambda\in\bbR$. Let us prove that for any $r>0$ 
\begin{equation}
\norm{\calC\mu(\lambda+x+ir)-H_{2r}\mu(\lambda)}
\leq
(2+4\pi) M\norm{\mu}(\lambda),
\label{b30}
\end{equation}
if $\abs{x}<r$. 
For simplicity of notation, let us take $\lambda=0$. Set
$z=x+ir$. 
We have:
$$
\calC\mu(z)
-
H_{2r}(0)
=
\int_\bbR \frac{d\mu(t)}{t-z}
-
\int_{\abs{t}>2r}\frac{d\mu(t)}{t}
=
\int_{\abs{t}>2r}\frac{z}{t(t-z)}d\mu(t)
+
\int_{\abs{t}\leq2r}\frac{d\mu(t)}{t-z}.
$$
Using the elementary estimate
$$
\Abs{\frac{z}{t(t-z)}}\leq 4\frac{r}{t^2+r^2}, 
\quad
\abs{x}<r, \quad \abs{t}>2r,
$$
and \eqref{b23a},
we obtain
$$
\Norm{\int_{\abs{t}>2r}\frac{z}{t(t-z)}d\mu(t)}
\leq
4\int_{\bbR}
\frac{r}{t^2+r^2}
d\norm{\mu}(t)
\leq
4\pi M\norm{\mu}(0).
$$
Finally, 
$$
\Norm{\int_{\abs{t}\leq 2r}\frac{d\mu(t)}{t-z}}
\leq
\frac1r\int_{\abs{t}\leq 2r}d\norm{\mu}(t)
\leq
2M\norm{\mu}(0),
$$
and we obtain \eqref{b30}.

2. 
By \eqref{b30}, taking supremum over $r>0$, we obtain
$$
T^<\mu(\lambda)
\leq
\hm\mu(\lambda)+(2+4\pi)M\norm{\mu}(\lambda)
$$
for all $\lambda\in\bbR$, 
and therefore 
$$
\norm{T^<\mu}_{L^{1,\infty}}
\leq
2\norm{\hm\mu}_{L^{1,\infty}}
+
2(2+4\pi)\norm{(M\norm{\mu})}_{L^{1,\infty}}.
$$
Now it remains to combine this with Proposition~\ref{prp.b1} and Lemma~\ref{lma.b2}
and work out the constants. 
\end{proof}

%%%%%%%%%%%%%%%%%%%%%%%%%%%%%%%%
\subsection{Cauchy transforms of general measures}\label{sec.b4}
%%%%%%%%%%%%%%%%%%%%%%%%%%%%%%%%

Now we are ready to handle general $X$-valued measures. 
As above, $X$ is a Banach space with a UMD property.
The lemma below is Theorem~\ref{thm.a7} with a specific value of constant:
%%%%%%%%%%%%%%%
\begin{lemma}\label{lma.b4}
%%%%%%%%%%%%%%%
Let $\mu$ be an $X$-valued measure with a bounded support on $\bbR$ 
and a bounded total variation. 
Then $T^<\mu$ belongs to $L^{1,\infty}(\bbR)$ and the estimate
\begin{equation}
\norm{T^<\mu}_{L^{1,\infty}}
\leq
(70548+9216 C_X)\norm{\mu}(\bbR)
\label{b29a}
\end{equation}
holds true. 
\end{lemma}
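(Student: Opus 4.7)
The plan is to deduce Lemma~\ref{lma.b4} from Lemma~\ref{lma.b3} by approximating $\mu$ from within by a sequence of simple $X$-valued measures of controlled total variation and then passing to the limit via a Fatou-type argument in $L^{1,\infty}(\bbR)$. Since $\supp\mu$ is bounded, I may assume $\supp\mu\subset[-A,A]$; I would partition this interval into $2^n$ half-open subintervals $\{I_k^{(n)}\}_{k=1}^{2^n}$ of equal length $A\cdot 2^{1-n}$ with centres $x_k^{(n)}$, and define the simple measure
$$
\mu_n=\sum_{k=1}^{2^n}\delta_{x_k^{(n)}}\,\mu(I_k^{(n)}).
$$
Each $\mu_n$ has the form \eqref{b7}, and directly from the definition \eqref{a13} one reads off $\norm{\mu_n}(\bbR)=\sum_k\norm{\mu(I_k^{(n)})}\leq\norm{\mu}(\bbR)$.

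The next step is to show that $\calC\mu_n\to\calC\mu$ pointwise on the open upper half-plane. For any $z$ with $\Im z>0$,
$$
\calC\mu_n(z)-\calC\mu(z)
=\sum_{k=1}^{2^n}\int_{I_k^{(n)}}\Bigl(\frac{1}{x_k^{(n)}-z}-\frac{1}{t-z}\Bigr)\,d\mu(t),
$$
and the elementary bound $\abs{(a-z)^{-1}-(b-z)^{-1}}\leq\abs{a-b}(\Im z)^{-2}$ together with $\abs{t-x_k^{(n)}}\leq A\cdot 2^{-n}$ on each $I_k^{(n)}$ gives $\norm{\calC\mu_n(z)-\calC\mu(z)}\leq A\cdot 2^{-n}(\Im z)^{-2}\norm{\mu}(\bbR)\to 0$. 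For any fixed $\lambda\in\bbR$ and any single $z=x+iy$ in the cone $y>0$, $\abs{x-\lambda}<y$, this gives $\norm{\calC\mu(z)}=\lim_n\norm{\calC\mu_n(z)}\leq\liminf_n T^<\mu_n(\lambda)$; taking the supremum over such $z$ on the left produces the pointwise lower semicontinuity
$$
T^<\mu(\lambda)\leq\liminf_{n\to\infty}T^<\mu_n(\lambda),\qquad\lambda\in\bbR.
$$

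To close the argument I would invoke a standard Fatou-type passage in $L^{1,\infty}(\bbR)$. For any $s>0$ and $\epsilon\in(0,s)$, the condition $T^<\mu(\lambda)>s$ forces $T^<\mu_n(\lambda)>s-\epsilon$ for all sufficiently large $n$; applying the ordinary Fatou lemma to indicator functions and then Lemma~\ref{lma.b3} to each $\mu_n$ gives
$$
\abs{\{T^<\mu>s\}}
\leq\liminf_n\abs{\{T^<\mu_n>s-\epsilon\}}
\leq\frac{35274+4608C_X}{s-\epsilon}\,\norm{\mu}(\bbR),
$$
so multiplying by $s$ and letting $\epsilon\downarrow 0$ produces \eqref{b29a} (in fact with considerable room inside the numerical constant). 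The whole argument is soft once Lemma~\ref{lma.b3} is in hand; the only aspect needing attention is verifying that the construction of $\mu_n$ interacts correctly with the non-tangential supremum defining $T^<$, which reduces to the trivial general inequality $\sup_z\liminf_n g_n(z)\leq\liminf_n\sup_z g_n(z)$ applied one $z$ at a time. I do not foresee any genuine obstacle beyond Lemma~\ref{lma.b3} itself.
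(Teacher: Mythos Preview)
Your proof is correct and follows the same core strategy as the paper: approximate $\mu$ by simple measures $\mu_n$ obtained by concentrating the mass of small subintervals at their centres, note that $\norm{\mu_n}(\bbR)\leq\norm{\mu}(\bbR)$, and reduce to Lemma~\ref{lma.b3}. The difference lies only in the passage to the limit. The paper introduces the truncated maximal function
$$
T_r^<\mu(\lambda)=\sup\{\norm{\calC\mu(x+iy)}: y>r,\ \abs{x-\lambda}<y\},
$$
observes that $\calC\mu_n\to\calC\mu$ \emph{uniformly} on $\{y>r\}$ (by analyticity), and for fixed $r,s$ picks a single large $n$ with $\sup_{y>r}\norm{\calC\mu_n-\calC\mu}\leq s/2$, whence $\{T_r^<\mu>s\}\subset\{T_r^<\mu_n>s/2\}$; this halving of $s$ is what doubles the constant to $70548+9216C_X$. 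Your Fatou/liminf argument works directly with $T^<$, needs only pointwise convergence of $\calC\mu_n(z)$, and recovers the sharper constant $35274+4608C_X$ --- so your remark about ``considerable room'' is justified. Both routes are short once Lemma~\ref{lma.b3} is available; yours is marginally more economical.
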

\begin{proof}
For $\lambda\in\bbR$ and $r>0$, denote
$$
T_r^<\mu(\lambda)
=
\sup\{\norm{\calC\mu(x+iy)}: y>r, \, \abs{x-\lambda}<y\};
$$
then 
$$
T^<\mu(\lambda)=\sup_{r>0} T_r^<\mu(\lambda). 
$$
Thus, it suffices to prove the estimate
\begin{equation}
s\cdot \abs{\{x\in\bbR: T_r^<\mu(\lambda)>s\}}
\leq 
(70548+9216 C_X)\norm{\mu}(\bbR), \quad s>0,
\label{b30a}
\end{equation}
for all $\lambda\in\bbR$ and $r>0$. 

Below we approximate the measure $\mu$ by simple measures.
For $n\in\bbN$, let $\{Q_\ell^{(n)}\}$ be the collection of all dyadic intervals of length 
$\abs{Q_\ell^{(n)}}=2^{-n}$, and let $c(Q_\ell^{(n)})$ be the center
of $Q_\ell^{(n)}$. 
Let us define the measure
\begin{equation}
\mu_n
=
\sum_\ell \delta_{c(Q_\ell^{(n)})}\mu(Q_\ell^{(n)}).
\label{b31}
\end{equation}
Since the support of $\mu$ is bounded, the sum in \eqref{b31} is finite. 
Clearly, $\mu_n$ is a simple measure of the type \eqref{b7} and 
$$
\norm{\mu_n}(\bbR)\leq \norm{\mu}(\bbR).
$$
Further, for each dyadic interval $Q$ we have
$$
\int_Qd\mu_n(x)=\int_Qd\mu(x)
$$
for all sufficiently large $n$. It follows that
$$
\Norm{
\int_\bbR \varphi(x)d\mu_n(x)
-
\int_\bbR \varphi(x)d\mu(x)}
\to 0
\quad
\text{ as $n\to\infty$}
$$
for all continuous functions $\varphi$. 
In particular, 
\begin{equation}
\Norm{\calC \mu_n(x+iy)-\calC\mu(x+iy)}
\to0
\quad
\text{ as $n\to\infty$}
\label{b32}
\end{equation}
for all $x\in\bbR$ and $y>0$. 
By analyticity, convergence in \eqref{b32} is 
uniform in $x\in\bbR$ and $y>r$ (for any fixed $r>0$).
Let us fix $r,s>0$ and choose $n$ sufficiently large so that
$$
\sup_{x\in\bbR}\sup_{y>r}
\Norm{\calC \mu_n(x+iy)-\calC\mu(x+iy)}
\leq \frac{s}2.
$$
Then 
$$
\{x\in\bbR: T_r^<\mu(x)>s\}
\subset
\{x\in\bbR: T_r^<\mu_n(x)>s/2\}.
$$
Now by Lemma~\ref{lma.b3}, 
$$
\abs{\{x\in\bbR: T_r^<\mu_n(x)>s/2\}}
\leq
(35274+4608 C_X)\frac{\norm{\mu_n}(\bbR)}{s/2}
\leq
(70548+9216 C_X)\frac{\norm{\mu}(\bbR)}{s},
$$
which proves \eqref{b30a}.
\end{proof}

%%%%%%%%%%%%%%%%%%%%%%%%%%%%%%%%%%%%%%%%%%%%%%%%%
%%%%%%%%%%%%%%%%%%%%%%%%%%%%%%%%%%%%%%%%%%%%%%%%%
\section{Boundary values of analytic Banach space valued functions}\label{sec.c}
%%%%%%%%%%%%%%%%%%%%%%%%%%%%%%%%%%%%%%%%%%%%%%%%%
%%%%%%%%%%%%%%%%%%%%%%%%%%%%%%%%%%%%%%%%%%%%%%%%%

%%%%%%%%%%%%%%%%%%%%%%%%%%%%%%%%%%%%%%%
\subsection{Boundary values of Cauchy transforms}\label{sec.c1}
%%%%%%%%%%%%%%%%%%%%%%%%%%%%%%%%%%%%%%%

First we need a general statement about existence of boundary values
of bounded Banach space analytic functions. 
This statement is known (in much greater generality, see e.g. \cite{Bu})
but for completeness below we give a simple proof. 

%%%%%%%%%%%%%%%%%%%%%%%
\begin{proposition}\cite{Bu}\label{prp.c1}
%%%%%%%%%%%%%%%%%%%%%%%
Let $X$ be a reflexive Banach space with a separable 
dual $X^*$, and let $F:\bbD\to X$ be a bounded analytic function. 
Then the non-tangential limit
\begin{equation}
F(e^{i\theta})
=
\lim_{y\to+0} F(e^{i\theta}(1-ix-y)), \quad  \abs{x}<y,
\label{c0}
\end{equation}
exists for a.e. $\theta\in[0,2\pi)$ in the norm of $X$. 
\end{proposition}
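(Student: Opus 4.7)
The plan is to bootstrap the scalar Fatou theorem to the Banach-valued setting, first obtaining weak non-tangential boundary values on a set of full measure and then upgrading to norm convergence via the vector-valued Poisson representation and the Lebesgue differentiation theorem for Bochner-integrable functions. A standard fact is that separability of $X^*$ implies separability of $X$. I would fix a countable set $\{\phi_n\}\subset X^*$ dense in the unit ball; each scalar function $\phi_n\circ F$ is bounded and analytic on $\bbD$, so by the classical Fatou theorem it has non-tangential boundary values off a null set $E_n$, and on the intersection $E$ of the complements (still of full measure) all the $\phi_n\circ F$ have non-tangential limits simultaneously.

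For $\theta\in E$ and any non-tangentially approaching sequence $z_k\to e^{i\theta}$, boundedness of $F$ together with reflexivity yields weak cluster points of $\{F(z_k)\}$, and any two such cluster points are sent by every $\phi_n$ to the same scalar limit and therefore coincide by density. This defines a unique weak non-tangential limit $f(e^{i\theta})\in X$ independent of the approach sequence. The resulting boundary function $f\colon\mathbb{T}\to X$ is weakly measurable and essentially bounded by $\sup_{\bbD}\norm{F}$; since $X$ is separable, Pettis's measurability theorem promotes it to a strongly measurable element of $L^\infty(\mathbb{T},X)\subset L^1(\mathbb{T},X)$.

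To obtain norm convergence I would next establish the vector-valued Poisson representation
\[
F(z)=\frac{1}{2\pi}\int_0^{2\pi} P_z(t)\,f(e^{it})\,dt, \qquad z\in\bbD,
\]
where $P_z$ is the Poisson kernel at $z$: applying any $\phi\in X^*$ reduces the identity to the scalar Poisson formula for the bounded harmonic function $\phi\circ F$ with boundary data $\phi\circ f$, and Hahn--Banach promotes the scalar equality to the $X$-valued one. Strong non-tangential convergence of $F$ to $f$ is then the vector-valued Poisson--Lebesgue differentiation theorem for Bochner functions: at every vector-valued Lebesgue point of $f$ (a set of full measure because $f$ is strongly measurable into the separable space $X$), the Poisson averages converge to $f$ in $X$-norm, and the standard domination of $P_z(t)$ by a constant multiple of $P_{\abs{z}}(t-\arg z)$ within a non-tangential cone passes from radial to non-tangential convergence.

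The principal obstacle is precisely this weak-to-norm upgrade: extracting weak boundary values is essentially automatic from reflexivity plus separability of $X^*$, but identifying the weak limit with an actual norm limit requires enough structure to run Bochner integration and Lebesgue differentiation on $\mathbb{T}$. Separability of $X$, supplied by separability of $X^*$, is exactly what makes both steps go through.
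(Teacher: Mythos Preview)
Your proposal is correct and follows essentially the same route as the paper: scalar Fatou applied to a countable dense family in $X^*$, reflexivity to manufacture the weak boundary function, measurability, vector-valued Poisson representation, and convergence at Lebesgue points. The only cosmetic differences are that the paper defines the boundary value by extending the bounded linear map $\ell\mapsto F_\ell(e^{i\theta})$ from the dense set to all of $X^*$ (rather than via weak sequential compactness), and it verifies strong measurability directly by checking that preimages of closed balls are countable intersections of measurable sets, whereas you invoke Pettis's theorem together with the separability of $X$.
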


\begin{proof}
1. 
Since $F$ is bounded, we may assume without loss of generality that $\norm{F(z)}\leq 1$ for all $\abs{z}<1$.
Let $\ell\in X^*$; then $\ell(F(z))$ is a scalar bounded analytic function, and therefore the non-tangential
boundary values
\begin{equation}
F_\ell(e^{i\theta})
=
\lim_{y\to+0} \ell(F(e^{i\theta}(1-ix-y))), \quad  \abs{x}<y,
\label{c1}
\end{equation}
exist for a.e. $\theta\in[0,2\pi)$ and $\ell(F(z))$ can be represented
as a Poisson integral of these boundary values: 
\begin{equation}
\ell(F(z))
=
\int_0^{2\pi}
P(z,e^{i\theta})F_\ell(e^{i\theta})d\theta, 
\quad
\abs{z}<1.
\label{c2}
\end{equation}
Let $X'\subset X^*$ be a countable dense set; then there exists
a set $Z\subset[0,2\pi)$ of full Lebesgue measure such that 
the limit \eqref{c1} exists for all $\ell\in X'$ and all $\theta\in Z$. 
Further, for all $\theta\in Z$, we have
$$
\abs{F_\ell(e^{i\theta})}\leq \norm{\ell}_{X^*}, 
\quad
\forall \ell\in X',
\quad
$$
and therefore we can extend the linear map $X'\ni\ell\mapsto F_\ell(e^{i\theta})$ 
to a bounded linear functional on $X^*$. By reflexivity, it follows that for every 
$\theta\in Z$ there exists an element $\wt F(e^{i\theta})\in X$ such that 
$\norm{\wt F(e^{i\theta})}\leq 1$ and $F_\ell(e^{i\theta})=\ell(\wt F(e^{i\theta}))$.

2. 
Let us check that $\wt F$ is in $L^1(\bbT, X)$. 
Since $\wt F$ is bounded, we only need to check that 
the map $\theta\mapsto \wt F(e^{i\theta})\in X$ is (Borel) measurable.
For any $\psi\in X$ and $a>0$, we have 
$$
\{\theta\in[0,2\pi): \norm{\wt F(e^{i\theta})-\psi}\leq a\}
=
\bigcap_{\ell\in X'}\{\theta\in[0,2\pi): \abs{F_\ell(e^{i\theta})-\ell(\psi)}\leq a\norm{\ell}_{X^*}\},
$$
and the r.h.s. is a measurable set by the measurability of $F_\ell$ for all $\ell$. 
Thus, the pre-image by $\wt F$ of any closed ball is measurable; from here it is easy to 
derive the measurability of $\wt F$.

3. By the previous step, we can integrate $\wt F(e^{i\theta})$, and using \eqref{c2}, we obtain
\begin{equation}
F(z)
=
\int_0^{2\pi}
P(z,e^{i\theta})\wt F(e^{i\theta})d\theta, 
\quad
\abs{z}<1.
\label{c3}
\end{equation}
Now, following the standard argument and using the density 
of continuous functions in $L^1(\bbT,X)$, one checks that 
a.e. $\theta\in[0,2\pi)$ is a Lebesgue point of $\wt F(e^{i\theta})$. 
From here and the representation \eqref{c3}
in a standard way one proves that the non-tangential limit
\eqref{c0} exists and equals $\wt F(e^{i\theta})$ at every 
Lebesgue point. 
\end{proof}

%%%%%%%%%%%%%%%%%
\begin{lemma}\label{lma.c2}
%%%%%%%%%%%%%%%%%
Let $X$ be a reflexive Banach space with a separable 
dual $X^*$, and suppose that $X$ possesses the UMD property.
Let $\mu$ be an $X$-valued measure on $\bbR$ with a finite total variation, 
and let $C\mu$ be the Cauchy transform of $\mu$, see \eqref{a14}.
Then the non-tangential limit 
\begin{equation}
\calC \mu(\lambda)
=
\lim_{y\to+0}\calC\mu(\lambda+x+iy), 
\quad \abs{x}<y,
\label{c4}
\end{equation}
exists for a.e. $\lambda\in\bbR$ in the norm of $X$.
\end{lemma}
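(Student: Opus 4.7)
The plan is to combine the weak-$L^1$ bound on the non-tangential maximal function from Theorem~\ref{thm.a7} with an approximation of $\mu$ by measures whose Cauchy transforms have non-tangential boundary values for classical scalar reasons. As a preliminary reduction, for any compact interval $I\subset\bbR$ one can split $\mu=\mu|_I+\mu|_{I^c}$: the transform $\calC(\mu|_{I^c})$ extends analytically to a neighbourhood of the interior of $I$, hence its non-tangential limit exists at every interior point of $I$. Exhausting $\bbR$ by such intervals reduces the problem to the case of $\mu$ with compact support.

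Since $X$ is reflexive, it enjoys the Radon-Nikodym property, so one may write $d\mu=f\,d\nu$ where $\nu=\norm{\mu}(\cdot)$ is a finite positive scalar measure and $f:\bbR\to X$ is Bochner-integrable with $\norm{f(\lambda)}=1$ for $\nu$-a.e.\ $\lambda$. By the very definition of the Bochner integral there are simple functions $f_n=\sum_{i=1}^{N_n}e_{i,n}\chi_{A_{i,n}}$, with $e_{i,n}\in X$ and $A_{i,n}\subset\bbR$ Borel, such that $\int_\bbR\norm{f-f_n}\,d\nu\to 0$. Setting $\mu_n=f_n\,d\nu$ produces $X$-valued measures with $\norm{\mu-\mu_n}(\bbR)\to 0$, and since
\[
\calC\mu_n(z)=\sum_{i=1}^{N_n}e_{i,n}\,\calC(\chi_{A_{i,n}}d\nu)(z),
\]
the classical Fatou theorem, applied to each scalar finite positive measure $\chi_{A_{i,n}}\,d\nu$, shows that $\calC\mu_n$ has non-tangential boundary values a.e.\ in the norm of $X$.

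To pass from $\mu_n$ to $\mu$, introduce the non-tangential oscillation
\[
\omega(\mu)(\lambda)=\limsup_{\substack{z_1,z_2\to\lambda\\\text{non-tang.}}}\norm{\calC\mu(z_1)-\calC\mu(z_2)},
\]
noting that the sought limit at $\lambda$ exists if and only if $\omega(\mu)(\lambda)=0$. A triangle-inequality estimate gives $\omega(\mu)\leq\omega(\mu_n)+2\,T^<(\mu-\mu_n)$ pointwise, and the first term vanishes a.e.\ by the previous paragraph. Applying Theorem~\ref{thm.a7} to $\mu-\mu_n$, for every $s>0$,
\[
\abs{\{\lambda\in\bbR:\omega(\mu)(\lambda)>s\}}
\leq
\abs{\{\lambda:T^<(\mu-\mu_n)(\lambda)>s/2\}}
\leq\frac{2C}{s}\norm{\mu-\mu_n}(\bbR),
\]
and sending $n\to\infty$ forces the left-hand side to vanish. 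Thus $\omega(\mu)=0$ a.e., which is exactly the existence of the non-tangential limit \eqref{c4} asserted by the lemma.

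The chief obstacle is obtaining an approximation $\mu_n\to\mu$ in the total-variation norm, since the crude dyadic approximation used in the proof of Theorem~\ref{thm.a7} does \emph{not} converge in total variation; this is what forces the appeal to the Radon-Nikodym property, supplied for free by reflexivity. The separable-dual assumption does not enter this sketch directly, but it would appear in an alternative proof that runs Plessner-type arguments on the sub-level domains $\{z:\Im z>0,\ \norm{\calC\mu(z)}<M\}$ and invokes Proposition~\ref{prp.c1} on each of their connected components.
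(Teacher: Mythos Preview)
Your proof is correct and takes a genuinely different route from the paper's. The paper proceeds by a Privalov-type construction: for each $s>0$ it uses Theorem~\ref{thm.a7} to find an open set $\widehat\Omega_s$ of small measure outside which $T^<\mu\leq s$, builds a sawtooth domain $D$ over $(\widehat\Omega_s)^c$ on which $\calC\mu$ is \emph{bounded}, conformally maps $D$ to the disc, and then invokes Proposition~\ref{prp.c1} (boundary values of bounded $X$-valued analytic functions) together with rectifiability of $\partial D$. Your argument instead follows the standard ``maximal function plus dense class'' paradigm: the Radon--Nikodym property (available by reflexivity) yields a density $f\in L^1(\norm{\mu};X)$, approximation of $f$ by simple functions gives $\mu_n\to\mu$ in \emph{total variation}, the scalar theory handles each $\mu_n$, and Theorem~\ref{thm.a7} applied to $\mu-\mu_n$ controls the oscillation of the remainder.

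Your approach is arguably more elementary in that it avoids both Proposition~\ref{prp.c1} and the conformal-mapping step, and---as you observe---it does not use the separable-dual hypothesis at all. The paper's approach, on the other hand, never needs the Radon--Nikodym density or any approximation in total variation; it trades these for the complex-analytic input of Proposition~\ref{prp.c1}. Amusingly, your closing remark about ``Plessner-type arguments on the sub-level domains $\{\norm{\calC\mu}<M\}$ invoking Proposition~\ref{prp.c1}'' is precisely the route the paper takes. One small terminological quibble: the scalar input you need for $\calC(\chi_{A_{i,n}}d\nu)$ is not literally Fatou's theorem (the measures are not a.c.) but the classical fact that Cauchy transforms of finite Borel measures on $\bbR$ have non-tangential limits a.e., which is standard.
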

\begin{proof}
First note that the existence of the limit \eqref{c4} is a local property of $\mu$, 
and therefore without loss of generality we may assume that $\supp \mu\subset [-1,1]$. 
Next, let $T^<\mu$ be the non-tangential maximal function \eqref{a15}, 
and for $s>0$ let 
$$
\Omega_s=\{x \in\bbR: (T^<\mu)(x)>s\}.
$$
By Theorem~\ref{thm.a7}, we have $\abs{\Omega_s}\leq \frac{C}{s}$
for all $s>0$. 
Let $\widehat \Omega_s$ be an open set such that $\Omega_s\subset \widehat \Omega_s$ 
and $\abs{\widehat \Omega_s}\leq \frac{2C}{s}$. 
It suffices to prove that for all $s>0$ the limit \eqref{c4} exists for a.e. $\lambda\in(\widehat \Omega_s)^c$. 

Below we essentially repeat the construction of the classical Privalov uniqueness theorem 
(see e.g. \cite[Section III D]{Koosis}). 
Fix $s>0$; the set $\widehat \Omega_s$ can be written as a union of open intervals 
$\cup I_n$. Let $c(I_n)$ be the center of $I_n$ and let $\Delta_n$ be the closed isosceles 
triangle with base $\overline{I_n}$ and $\pi/2$ angle opposite to $\overline{I_n}$: 
$$
\Delta_n=
\{x+iy: 0\leq y\leq \tfrac12\abs{I_n}, \ \abs{x-c(I_n)}\leq \abs{y-\tfrac12\abs{I_n}}\}.
$$
Consider the domain 
$$
D=\{z: \Im z>0, \abs{z}<2, z\notin \Delta_n\  \forall n\}.
$$
By construction, $D$ is open, simply connected, 
the boundary of $D$ is a rectifiable Jordan curve, 
and $\norm{\calC\mu(z)}\leq s$ for all $z\in D$. 
Let $\varphi$ be a conformal map of the unit disk $\bbD$ 
onto $D$ and put $F(z)=\calC\mu(\varphi(z))$ for $\abs{z}<1$. 
Then by Proposition~\ref{prp.c1}, the non-tangential boundary
values of $F$ exist for almost all points on the unit circle. 
Since the boundary of $D$ is a rectifiable Jordan curve,
it follows (see e.g. \cite[Section~II C]{Koosis}) that the 
non-tangential limits of $\calC\mu(z)$ exist for almost every
point on the boundary of $D$. In particular, these limits 
exist almost everywhere on $(\widehat\Omega_s)^c$, 
as required. 
\end{proof}

%%%%%%%%%%%%%%%%%%%%%%%%%%%%%%%%%%%%%%%
\subsection{Boundary values of $B_0$ and $B_1$ under global assumptions}\label{sec.c2}
%%%%%%%%%%%%%%%%%%%%%%%%%%%%%%%%%%%%%%%
In this subsection and in the next one, we prove the existence of 
the boundary values of the sandwiched resolvents  $B_0(z)$, $B_1(z)$ (see \eqref{a11})
in an appropriate sense. 
In order to make our presentation more readable, 
in this subsection we consider the simplest case of 
Theorems~\ref{thm.a3} and \ref{thm.a4} under the additional 
assumption that the measures $\nu_0$ and $\nu_1$ in \eqref{a4}, \eqref{a5} 
are finite rather than $\sigma$-finite. 
In the next subsection, by using  standard localisation 
arguments, we extend these results to the case of the local 
assumptions of Theorems~\ref{thm.a5} and \ref{thm.a6}.

%%%%%%%%%%%%%%%%%
\begin{theorem}\label{thm.c3}
%%%%%%%%%%%%%%%%%
Assume the hypothesis of Theorem~\ref{thm.a3} 
and suppose in addition that the measure $\nu_0$ in \eqref{a4} is finite. 
Then the limits $B_0(\lambda+i0)$, $B_1(\lambda+i0)$
exist for a.e. $\lambda\in\bbR$ in the  norm of $\Sch_p$. 
\end{theorem}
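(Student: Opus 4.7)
The plan is to split the proof into two parts: first, I establish existence of $B_0(\lambda+i0)$ directly from the Cauchy transform machinery of Section~\ref{sec.b}, and then I derive existence of $B_1(\lambda+i0)$ from the resolvent identity \eqref{a17} via a regularised determinant and a Privalov-type argument.

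For $B_0$, the spectral theorem gives $B_0(z) = \calC\mu_0(z)$, where $\mu_0(\delta) = G E_{H_0}(\delta) G^*$ is the $\Sch_p$-valued measure of \eqref{a12}. The hypothesis \eqref{a4} together with finiteness of $\nu_0$ says exactly that $\mu_0$ has finite total variation in $\Sch_p$. By the remark immediately following Theorem~\ref{thm.a3}, I may assume $1 < p < \infty$; under this hypothesis $\Sch_p$ is a reflexive UMD space with separable dual (since $\calH$ is separable). Lemma~\ref{lma.c2} then applies verbatim to $\mu_0$, yielding the non-tangential norm limit $B_0(\lambda+i0) \in \Sch_p$ for a.e. $\lambda \in \bbR$.

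For $B_1$, in view of \eqref{a17} it suffices to show that $I + B_0(\lambda+i0)J$ is invertible for a.e.\ $\lambda$. Invertibility is detected by the regularised Fredholm determinant
\[
D(z) := \Det_q\bigl(I + B_0(z) J\bigr), \qquad \Im z > 0,
\]
where $q \geq p$ is an integer: $D(z) \neq 0$ iff $I + B_0(z)J$ is invertible. The function $D$ is analytic in $\bbC_+$ and non-vanishing there because $H_1 - z$ is boundedly invertible for $\Im z > 0$. Since $\Det_q$ is continuous on $\Sch_q$ and $B_0(\lambda+i0)$ exists in $\Sch_p \subset \Sch_q$ for a.e.\ $\lambda$, the scalar boundary values $D(\lambda+i0)$ also exist a.e. The task reduces to ruling out $D(\lambda+i0) = 0$ on a set of positive measure.

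The main obstacle is that $D$ is not globally bounded in $\bbC_+$, since $\norm{B_0(z)}_p$ can blow up near the spectrum of $H_0$, so a naive global application of Privalov's uniqueness theorem is unavailable. To handle this I would localise exactly as in the proof of Lemma~\ref{lma.c2}: for each $s > 0$, use the weak-$L^{1,\infty}$ bound on the non-tangential maximal function $T^<\mu_0$ from Theorem~\ref{thm.a7} to thicken the exceptional set $\{T^<\mu_0 > s\}$ to an open set $\widehat\Omega_s$ of measure $O(1/s)$, and then form the Privalov-type domain $U_s \subset \bbC_+$ by removing the isosceles triangles sitting above the components of $\widehat\Omega_s$. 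By construction $U_s$ is a simply-connected Jordan domain with rectifiable boundary on which $\norm{B_0(z)}_p \leq s$, and hence, via the standard estimate $\abs{\Det_q(I+A)} \leq \exp(c_q \norm{A}_q^q)$, the analytic function $D$ is bounded on $U_s$. Conformally mapping $U_s$ to $\bbD$ and applying the classical Privalov uniqueness theorem to the resulting bounded analytic function shows that $D(\lambda+i0) \neq 0$ for a.e.\ $\lambda$ in the base of $U_s$ (otherwise $D \equiv 0$ on $U_s$, contradicting non-vanishing of $D$ in $\bbC_+$). Since $|\bbR \setminus \widehat\Omega_s| \to \infty$ as $s \to \infty$, letting $s \to \infty$ recovers the conclusion for a.e.\ $\lambda \in \bbR$, and then \eqref{a17} yields $B_1(\lambda+i0) \in \Sch_p$ at every such $\lambda$.
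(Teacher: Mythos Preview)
Your argument is correct and follows the paper's strategy exactly for $B_0$ and in outline for $B_1$: both use the regularised determinant $\Det_q(I+B_0(z)J)$ and Privalov's uniqueness theorem. The only difference is your detour through tent domains $U_s$, which you introduce because you believe Privalov's theorem requires the analytic function to be bounded. It does not: the Lusin--Privalov uniqueness theorem (Koosis, Section~III~D, as cited in the paper) applies to \emph{any} function holomorphic in $\bbC_+$ whose non-tangential boundary values vanish on a set of positive measure, with no growth hypothesis. The paper therefore simply observes that $d(z)=\Det_q(I+B_0(z)J)$ is analytic and non-vanishing in $\bbC_+$, that $d(\lambda+i0)$ exists a.e.\ by continuity of $\Det_q$ on $\Sch_q$, and invokes Privalov directly to conclude $d(\lambda+i0)\neq0$ a.e. Your localisation via the $L^{1,\infty}$ bound on $T^<\mu_0$ and the sawtooth domains is precisely the standard \emph{proof} of Privalov's theorem, so what you have written is valid but redundant---you are re-deriving a result you could have quoted.
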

\begin{proof}
By the spectral theorem, the function $B_0(z)$ is the 
Cauchy transform of the $\Sch_p$-valued measure 
$\mu_0$ given by \eqref{a12}. 
Assumption \eqref{a4} ensures that $\mu_0$ has a finite total variation. 
Thus, the existence of the non-tangential limits $B_0(\lambda+i0)$
in the norm of $\Sch_p$ 
follows from Lemma~\ref{lma.c2} with $X=\Sch_p$. 

Let us consider the function $B_1(z)$. 
First let us check the identity \eqref{a17}.
Using the standard resolvent identity 
\begin{equation}
R_1(z)=R_0(z)-R_1(z)G^*JGR_0(z)
=R_0(z)-R_0(z)G^*JGR_1(z),
\label{c5}
\end{equation}
we obtain
\begin{equation}
(I-B_1(z)J)(I+B_0(z)J)
=
(I+B_0(z)J)(I-B_1(z)J)
=I.
\label{c7}
\end{equation}
Thus, $I+B_0(z)J$ has a bounded inverse for all $\Im z\not=0$. 
Next, similarly to \eqref{c5}, we get
$$
(I+B_0(z)J)B_1(z)=B_0(z),
$$
and thus we obtain \eqref{a17}. 
Since the limits $B_0(\lambda+i0)$ exist in $\Sch_p$ for a.e. $\lambda\in\bbR$,
it suffices to check that the operator $I+B_0(\lambda+i0)J$
is invertible for a.e. $\lambda\in\bbR$. 
In order to do this, we employ the argument of \cite[Theorem~1.8.5]{Yafaev}. 
Let $q\geq p$ be an integer;
we make use of the regularised determinant $\Det_q$, see e.g. \cite[Section~1.7]{Yafaev}.
We only need two properties of $\Det_q$:
\begin{enumerate}[(a)]
\item
$\Det_q(I+A)$ is an analytic function of $A\in\Sch_q$; 
\item
if $A\in\Sch_q$, then $I+A$ is invertible if and only if $\Det_q(I+A)\not=0$. 
\end{enumerate}
As $\Sch_p\subset \Sch_q$, the above properties of course also apply to $A\in\Sch_p$. 
Consider the function 
$$
d(z)=\Det_q(I+B_0(z)J), \quad \Im z>0.
$$
The function $d(z)$ is analytic and non-vanishing in $\bbC_+$. 
By the previous step of the proof, the non-tangential limits $d(\lambda+i0)$ 
exist for a.e. $\lambda\in\bbR$. By Privalov's uniqueness theorem 
(see e.g. \cite[Section III D]{Koosis}), it follows that $d(\lambda+i0)\not=0$ 
for a.e. $\lambda\in\bbR$. Thus, the operator $I+B_0(\lambda+i0)J$
is invertible for a.e. $\lambda\in\bbR$, as required. 
\end{proof}

%%%%%%%%%%%%%%%%%
\begin{theorem}\label{thm.c4}
%%%%%%%%%%%%%%%%%
Assume the hypothesis of Theorem~\ref{thm.a4}
and suppose in addition that the measures $\nu_0$ and $\nu_1$ in \eqref{a5} are finite. 
Then for any $\psi\in\calH$, there exists a set $Z_\psi\subset \bbR$ 
of full Lebesgue measure such that for all $\lambda\in Z_\psi$
the limits 
\begin{equation}
\lim_{\eps\to+0}
B_j(\lambda+i\eps)\psi,
\quad
j=0,1,
\label{c6}
\end{equation}
exist in the norm of $\calH$. 
\end{theorem}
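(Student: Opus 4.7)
The plan is to reduce Theorem~\ref{thm.c4} to Lemma~\ref{lma.c2} applied to suitable $\calH$-valued measures, exploiting the fact that $\calH$ is itself a UMD space. For a fixed element $\psi \in \calH$ and $j = 0, 1$, define
$$
\mu_j^\psi(\delta) := G E_{H_j}(\delta) G^* \psi, \qquad \delta \subset \bbR \text{ Borel}.
$$
Since $E_{H_j}(\cdot)$ is a projection-valued measure (countably additive in the strong operator topology) and $G, G^*$ are bounded, $\mu_j^\psi$ is a countably additive $\calH$-valued measure. The hypothesis \eqref{a5} of Theorem~\ref{thm.a4} gives
$$
\norm{\mu_j^\psi(\delta)}_{\calH} \leq \norm{G E_{H_j}(\delta) G^*} \, \norm{\psi} \leq \nu_j(\delta) \norm{\psi}
$$
for every interval $\delta \subset \bbR$, so $\mu_j^\psi$ has finite total variation bounded by $\norm{\psi}\,\nu_j(\bbR)$.

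By the spectral theorem,
$$
B_j(z)\psi = G(H_j - zI)^{-1} G^* \psi = \int_{\bbR} \frac{d\mu_j^\psi(t)}{t - z}, \qquad \Im z > 0,
$$
so $B_j(\cdot)\psi$ is precisely the Cauchy transform $\calC\mu_j^\psi$ in the sense of \eqref{a14}. The separable Hilbert space $\calH$ is reflexive, has separable dual (identified with $\calH$), and possesses the UMD property (as noted in Section~\ref{sec.a4}). Therefore Lemma~\ref{lma.c2} applies with $X = \calH$: for each $j = 0, 1$ there exists a set $Z_j \subset \bbR$ of full Lebesgue measure such that the non-tangential limit of $\calC\mu_j^\psi$ at $\lambda$ exists in the norm of $\calH$ for every $\lambda \in Z_j$. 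The vertical limit \eqref{c6} is a special case of the non-tangential limit, and setting $Z_\psi = Z_0 \cap Z_1$ yields the conclusion.

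The proof is considerably softer than that of Theorem~\ref{thm.c3} precisely because the symmetric hypothesis of Theorem~\ref{thm.a4} supplies operator-norm bounds on the spectral measures of both $H_0$ \emph{and} $H_1$. Consequently there is no need to recover boundary behaviour of $B_1$ from that of $B_0$ via the resolvent identity \eqref{a17} and the regularised-determinant/Privalov argument; the two indices $j = 0, 1$ are handled in parallel. The only real content beyond invoking Lemma~\ref{lma.c2} is verifying that $\mu_j^\psi$ is a countably additive $\calH$-valued measure of finite total variation, which is immediate from the spectral theorem together with \eqref{a5}.
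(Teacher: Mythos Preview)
Your proof is correct and follows precisely the approach of the paper, which simply says ``Follows by applying Lemma~\ref{lma.c2} to the $\calH$-valued measures \eqref{a18}''; you have merely written out the verification that those measures have finite total variation and that $\calH$ satisfies the hypotheses of Lemma~\ref{lma.c2}. Your closing commentary about why no Privalov/determinant argument is needed here is also accurate.
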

\begin{proof}
Follows by applying Lemma~\ref{lma.c2} to the $\calH$-valued measures 
\eqref{a18}. 
\end{proof}

%%%%%%%%%%%%%%%%%%%%%%%%%%%%%%%%%%%%%%%
\subsection{Boundary values of $B_0$ and $B_1$ under local assumptions}\label{sec.c3}
%%%%%%%%%%%%%%%%%%%%%%%%%%%%%%%%%%%%%%%

%%%%%%%%%%%%%%%%%
\begin{theorem}\label{thm.c6}
%%%%%%%%%%%%%%%%%
Assume the hypothesis of Theorem~\ref{thm.a5}.
Then the limits $B_0(\lambda+i0)$, $B_1(\lambda+i0)$
exist for a.e. $\lambda\in\Delta$ in the operator norm. 
\end{theorem}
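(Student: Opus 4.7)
The plan is to extend the arguments of Theorem~\ref{thm.c3} to the local setting by splitting the spectral projection of $H_0$ along $\Delta$ and its complement. Write $B_0(z) = B_0^{(1)}(z) + B_0^{(2)}(z)$, where
\begin{equation*}
B_0^{(1)}(z) = GE_{H_0}(\Delta)R_0(z)G^*, \qquad B_0^{(2)}(z) = GE_{H_0}(\bbR\setminus\Delta)R_0(z)G^*.
\end{equation*}
The summand $B_0^{(1)}(z)$ is the Cauchy transform of the $\Sch_p$-valued measure $\mu_0(\delta) = GE_{H_0}(\delta\cap\Delta)G^*$, whose total variation on $\bbR$ is bounded by $\nu_0(\Delta)<\infty$ thanks to \eqref{a9}. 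Applying Lemma~\ref{lma.c2} with $X=\Sch_p$ (which possesses the UMD property), I obtain non-tangential boundary values $B_0^{(1)}(\lambda+i0)$ in the norm of $\Sch_p$ for a.e.\ $\lambda\in\bbR$. For $B_0^{(2)}(z)$, the spectrum of $H_0$ on $\Ran E_{H_0}(\bbR\setminus\Delta)$ lies in $\bbR\setminus\Delta$, so $B_0^{(2)}(z)$ extends holomorphically across $\Delta$ to the connected domain $\bbC\setminus(\bbR\setminus\Delta)$ and is continuous in operator norm on $\Delta$; since $E_{H_0}(\bbR\setminus\Delta)$ commutes with $R_0(z)$ and $GR_0(z)$ is compact by hypothesis, $B_0^{(2)}(z)$ is even compact. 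Summing the two contributions gives the existence of $B_0(\lambda+i0)$ in operator norm for a.e.\ $\lambda\in\Delta$.

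For $B_1(z)$ I would use the identity \eqref{a17}, reducing the task to showing invertibility of $I+B_0(\lambda+i0)J$ for a.e.\ $\lambda\in\Delta$. The new difficulty, absent in the global case, is that $B_0^{(2)}(z)$ is only compact rather than of class $\Sch_q$, so one cannot directly form the regularized determinant of $I+B_0(z)J$. My workaround is to first apply the analytic Fredholm theorem to the analytic family of compact perturbations of the identity $z\mapsto I+B_0^{(2)}(z)J$ on $\bbC\setminus(\bbR\setminus\Delta)$: since $\norm{B_0^{(2)}(iy)}\to 0$ as $y\to+\infty$, this family is invertible at points of large imaginary part, so its inverse is meromorphic on that domain with a discrete exceptional set $\Sigma$; in particular $\Sigma\cap\Delta$ is at most countable and hence of Lebesgue measure zero in $\Delta$.

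On any small disk $D\subset\bbC_+\cup(\Delta\setminus\Sigma)$ around a point $\lambda_0\in\Delta\setminus\Sigma$, the inverse $(I+B_0^{(2)}(z)J)^{-1}$ is analytic and uniformly bounded, so one has the factorization
\begin{equation*}
I+B_0(z)J=\bigl(I+B_0^{(2)}(z)J\bigr)\bigl(I+(I+B_0^{(2)}(z)J)^{-1}B_0^{(1)}(z)J\bigr),
\end{equation*}
in which the correction $(I+B_0^{(2)}(z)J)^{-1}B_0^{(1)}(z)J$ lies in $\Sch_p$. Setting $d(z)=\Det_q\bigl(I+(I+B_0^{(2)}(z)J)^{-1}B_0^{(1)}(z)J\bigr)$ for an integer $q\geq p$, the function $d$ is analytic on $D\cap\bbC_+$ and non-vanishing there (since $I+B_0(z)J$ is invertible on $\bbC_+$ by \eqref{c7} while $I+B_0^{(2)}(z)J$ is invertible on $D$), and admits non-tangential boundary values a.e.\ on $D\cap\Delta$ by continuity of $\Det_q$ on $\Sch_q$ together with the $\Sch_p$-boundary values of $B_0^{(1)}$ from the first paragraph. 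Privalov's uniqueness theorem then yields $d(\lambda+i0)\neq 0$ a.e.\ on $D\cap\Delta$, whence $I+B_0(\lambda+i0)J$ is invertible a.e.\ on $D\cap\Delta$. Covering $\Delta\setminus\Sigma$ by countably many such disks and invoking \eqref{a17} produces the operator-norm boundary values $B_1(\lambda+i0)$ for a.e.\ $\lambda\in\Delta$. The main obstacle throughout is the compact-but-not-Schatten character of $B_0^{(2)}$, which forces the extra analytic Fredholm step and the factorization trick in order to isolate a genuinely $\Sch_p$-valued perturbation before the determinant machinery from Theorem~\ref{thm.c3} can be applied.
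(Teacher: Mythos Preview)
Your proof is correct and follows essentially the same route as the paper: the same decomposition $B_0=B_0^{(1)}+B_0^{(2)}$, the same appeal to Lemma~\ref{lma.c2} for $B_0^{(1)}$, the same factorisation $I+B_0(z)J=(I+B_0^{(2)}(z)J)\bigl(I+\wt B_0(z)\bigr)$ with $\wt B_0(z)=(I+B_0^{(2)}(z)J)^{-1}B_0^{(1)}(z)J\in\Sch_p$, and the same $\Det_q$/Privalov conclusion. The only difference is in how you feed the analytic Fredholm alternative: you invoke $\norm{B_0^{(2)}(iy)}\to0$ as $y\to\infty$, which suffices but allows the exceptional set $\Sigma$ to meet $\bbC_+$ and forces you to localise Privalov to small half-disks; the paper instead checks directly (using that $\Im R_0(z)$ has trivial kernel) that $I+B_0^{(2)}(z)J$ is invertible for \emph{every} $z$ with $\Im z\neq0$, so $\wt B_0$ and $d$ are globally analytic on $\bbC_+$ and Privalov is applied once.
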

\begin{proof}
1.
For $j=0,1$, let us write
$$
R_j(z)=R_j(z)E_{H_0}(\Delta)+R_j(z)E_{H_0}(\Delta^c);
$$
this gives rise to the decomposition 
\begin{equation}
B_j(z)=B_j^{(1)}(z)+B_j^{(2)}(z)
\label{c9}
\end{equation}
with 
$$
B_j^{(1)}(z)
=
GR_j(z)E_{H_j}(\Delta)G^*,
\quad
B_j^{(2)}(z)
=
GR_j(z)E_{H_j}(\Delta^c)G^*.
$$
It is clear that the operators $B_j^{(2)}(z)$ 
are analytic in $z\in\bbC\setminus(\bbR\setminus\Delta)$. 

Consider the operator $B_0(z)$.
Representation \eqref{c9} reduces the problem to the existence
of limits $B_0^{(1)}(\lambda+i0)$. 
Similarly to the proof of Theorem~\ref{thm.c3}, we define the measure 
$\mu_0$ on $\Delta$ by
$$
\mu_0(\delta)
=
(GE_{H_0}(\delta))(GE_{H_0}(\delta))^*, 
\quad
\delta\subset\Delta,
$$
and find that $B_0^{(1)}(z)=\calC\mu_0(z)$, $\Im z>0$. 
By hypothesis, the $\Sch_p$-valued measure $\mu_0$ 
has a finite total variation. 
Thus, by Lemma~\ref{lma.c2} with $X=\Sch_p$, 
the non-tangential limits $B_0^{(1)}(\lambda+i0)$ 
exist in the norm of $\Sch_p$ for a.e. $\lambda\in\Delta$.

2. 
Before considering the boundary values of the operator $B_1(z)$, 
we need to discuss the properties of the operator $B_0^{(2)}(z)$.
First note that, by assumption, $GR_0(z)$ is compact and therefore
$B_0^{(2)}(z)$ is compact. Next, let us check that the operator
$I+B_0^{(2)}(z)J$ is invertible for all $\Im z\not=0$. 
Assume that 
\begin{equation}
\psi+B_0^{(2)}(z)J\psi=0
\label{c14}
\end{equation}
for some element $\psi$.
Taking the inner product with $J\psi$ and evaluating the imaginary part,
we obtain
$$
\Im (B_0^{(2)}(z)J\psi,J\psi)=0,
$$
which can be written as
$$
\Im (R_0(z)\varphi,\varphi)=0,
\quad
\varphi=E_{H_0}(\Delta^c)G^*J\psi.
$$
Since the kernel of $\Im R_0(z)$ is trivial, we
get $\varphi=0$. It follows that $B_0^{(2)}(z)J\psi=0$ and so by \eqref{c14} we get $\psi=0$,
as required.

3. 
Consider the operator $B_1(z)$.
As in the proof of Theorem~\ref{thm.c3}, formula \eqref{a17}
reduces the proof to checking that
the operator $I+B_0(\lambda+i0)J$ is invertible for a.e. $\lambda\in\Delta$. 
We will use the decomposition \eqref{c9}.

As discussed above, $B_0^{(2)}(z)$ is compact and analytic in 
$z\in\bbC\setminus(\bbR\setminus\Delta)$. 
Let $\calQ$ be the set of $\lambda\in\Delta$ such that the equation
$$
\psi+B_0^{(2)}(\lambda)J\psi=0
$$
has a non-trivial solution $\psi\not=0$. By the analytic Fredholm 
alternative, the set $\calQ$ is countable and the elements of $\calQ$ 
may accumulate only to the endpoints of $\Delta$. 
We obtain that $(I+B_0^{(2)}(z)J)^{-1}$ is analytic in $z\in\bbC_+$ and has 
non-tangential limit values as $z\to\lambda$ for all $\lambda\in\Delta\setminus\calQ$. 

Denote 
$$
\wt B_0(z)=(I+B_0^{(2)}(z)J)^{-1}B_0^{(1)}(z)J, 
\quad \Im z>0.
$$
By the above analysis and by step 1 of the proof, the operator $\wt B_0(z)$ 
belongs to $\Sch_p$ and the non-tangential limits $\wt B_0(\lambda+i0)$ 
exist for a.e. $\lambda\in\Delta$ in the norm of $\Sch_p$. 
Using the analytic function $\Det_q(I+\wt B_0(z))$ and Privalov's uniqueness
theorem just as in the proof of Theorem~\ref{thm.c3}, we conclude 
that the operator $I+\wt B_0(\lambda+i0)$ is invertible for a.e. $\lambda\in\Delta$. 
Finally, we have 
$$
I+B_0(z)J=(I+B_0^{(2)}(z)J)(I+\wt B_0(z));
$$
since both $I+B_0^{(2)}(\lambda+i0)J$ and $I+\wt B_0(\lambda+i0)$ exist and are invertible
for a.e. $\lambda\in\Delta$, we obtain the required statement. 
\end{proof}

%%%%%%%%%%%%%%%%%
\begin{theorem}\label{thm.c7}
%%%%%%%%%%%%%%%%%
Assume the hypothesis of Theorem~\ref{thm.a6}. 
Then for any $\psi\in\calH$, there exists a set $Z_\psi\subset \Delta$ 
of full Lebesgue measure such that for all $\lambda\in Z_\psi$
the limits 
\begin{equation}
\lim_{\eps\to+0}
B_j(\lambda+i\eps)\psi,
\quad
j=0,1,
\label{c15}
\end{equation}
exist in the norm of $\calH$. 
\end{theorem}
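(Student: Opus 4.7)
The plan is to adapt the proof of Theorem~\ref{thm.c4} to the local setting, using the decomposition technique already employed in the proof of Theorem~\ref{thm.c6}. The decisive simplification here, compared to Theorem~\ref{thm.c6}, is that the hypothesis of Theorem~\ref{thm.a6} directly bounds \emph{both} $\norm{GE_{H_0}(\delta)G^*}$ and $\norm{GE_{H_1}(\delta)G^*}$ by finite measures on $\Delta$. Consequently $B_0(z)\psi$ and $B_1(z)\psi$ can be treated symmetrically and independently, bypassing the resolvent identity \eqref{a17} and Privalov's uniqueness theorem altogether.

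Fix $\psi\in\calH$. For $j=0,1$, imitate the proof of Theorem~\ref{thm.c6} and write
$$
B_j(z)\psi = B_j^{(1)}(z)\psi + B_j^{(2)}(z)\psi,
$$
with $B_j^{(1)}(z)\psi = GR_j(z)E_{H_j}(\Delta)G^*\psi$ and $B_j^{(2)}(z)\psi = GR_j(z)E_{H_j}(\Delta^c)G^*\psi$. The complementary piece $B_j^{(2)}(z)\psi$, being the Cauchy transform of a bounded $\calH$-valued spectral measure supported in $\Delta^c$, extends analytically to a neighborhood of every point of the open interval $\Delta$; its non-tangential limit as $z\to\lambda$ therefore exists in $\calH$ for every $\lambda\in\Delta$. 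For the local piece, introduce the $\calH$-valued measures
$$
\mu_j^\psi(\delta) = GE_{H_j}(\delta)G^*\psi, \qquad \delta\subset\Delta,
$$
and observe that, by hypothesis \eqref{a10}, $\norm{\mu_j^\psi(\delta)}\leq \nu_j(\delta)\norm{\psi}$ on every interval $\delta\subset\Delta$, so each $\mu_j^\psi$ has finite total variation on $\Delta$. By the spectral theorem $B_j^{(1)}(z)\psi = (\calC\mu_j^\psi)(z)$ for $\Im z>0$.

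Since $\calH$ is a Hilbert space, it is reflexive with separable dual and enjoys the UMD property, so Lemma~\ref{lma.c2} applies with $X=\calH$ and yields a set $Z_\psi^{(j)}\subset\Delta$ of full Lebesgue measure on which the non-tangential limit $B_j^{(1)}(\lambda+i0)\psi$ exists in the norm of $\calH$. Setting $Z_\psi = Z_\psi^{(0)}\cap Z_\psi^{(1)}$ and combining with the limits of $B_j^{(2)}(\lambda+i\eps)\psi$ established above gives the existence of the limits \eqref{c15} for all $\lambda\in Z_\psi$. There is no real obstacle: the analytic heavy lifting is entirely absorbed into Lemma~\ref{lma.c2}, and the localization is handled for free by the analyticity of $B_j^{(2)}$ across $\Delta$; notably, the compactness assumption on $GR_0(z)$ in Theorem~\ref{thm.a6}, which was essential in the proof of Theorem~\ref{thm.c6} for running the analytic Fredholm alternative, plays no role here.
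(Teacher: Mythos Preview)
Your proof is correct and follows essentially the same approach as the paper: the paper's proof is a two-line reduction via the decomposition \eqref{c9} to the application of Lemma~\ref{lma.c2} to the $\calH$-valued measures \eqref{a18} on $\Delta$, and you have spelled this out in full detail. Your observations that the symmetric hypothesis of Theorem~\ref{thm.a6} allows $B_0$ and $B_1$ to be treated independently and that the compactness of $GR_0(z)$ plays no role here are correct and worth making explicit.
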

\begin{proof}
Representation \eqref{c9} reduces the question to the proof 
of existence of the limits $B_j^{(1)}(\lambda+i0)\psi$. 
The existence of these limits 
follows by applying Lemma~\ref{lma.c2} to the $\calH$-valued measures 
\eqref{a18} on $\Delta$. 
\end{proof}

%%%%%%%%%%%%%%%%%%%%%%%%%%%%%%%%%%%%%%%%%%%%%%%%%
%%%%%%%%%%%%%%%%%%%%%%%%%%%%%%%%%%%%%%%%%%%%%%%%%
\section{Proof of the main results}\label{sec.d}
%%%%%%%%%%%%%%%%%%%%%%%%%%%%%%%%%%%%%%%%%%%%%%%%%
%%%%%%%%%%%%%%%%%%%%%%%%%%%%%%%%%%%%%%%%%%%%%%%%%

It is clear that Theorem~\ref{thm.a3} follows from Theorem~\ref{thm.a5}, 
and Theorem~\ref{thm.a4} follows from Theorem~\ref{thm.a6}. 
Thus, it remains to prove  Theorems~\ref{thm.a5} and \ref{thm.a6}.
Let $H_1=H_0+G^*JG$, where 
the operators $G$ and $J=J^*$ are bounded and $GR_0(z)$
is compact. 
We use the following statement from abstract scattering theory:
%%%%%%%%%%%%%%%%%%%%
\begin{proposition}\cite[Section~5]{Yafaev}\label{prp.d1}
%%%%%%%%%%%%%%%%%%%%
Let $H_j$, $B_j(z)$, $j=0,1$ be as defined above. 
Let $\Delta\subset \bbR$ be an open interval. Assume that for 
a.e. $\lambda\in \Delta$, the \emph{weak} limits
\begin{equation}
\wlim_{\eps\to+0} \Im B_j(\lambda+i\eps), 
\quad
j=0,1,
\label{d1}
\end{equation}
exist; and assume that for any $\psi\in\calH$, there exists
a set $Z_\psi\subset\Delta$ of full Lebesgue measure such that
for all $\lambda\in Z_\psi$, the limits
\begin{equation}
\lim_{\eps\to+0}B_j(\lambda+i\eps)\psi, 
\quad
j=0,1,
\label{d2}
\end{equation}
exist in the norm of $\calH$. 
Then the local wave operators $W_\pm(H_1,H_0;\Delta)$ 
exist and are complete. 
\end{proposition}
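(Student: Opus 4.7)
The plan is to apply the stationary method of scattering theory to convert the time-dependent definition \eqref{a6} of the local wave operators into a spectral integral in $\lambda$ that can be read off the boundary values of $B_0$ and $B_1$.

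First, I would use the weak limits \eqref{d1} to establish absolute continuity, on $\Delta$, of the scalar spectral measures $d(GE_{H_j}(\lambda)G^*\varphi,\psi)$, the densities being $\tfrac1\pi\Im B_j(\lambda+i0)$. This has two consequences: on the one hand, $\Ran E_{H_j}(\Delta)G^*\subset \Ran P^{\ac}_{H_j}$, so the a.c.\ projectors may be harmlessly inserted or dropped when sandwiching against $G$ and $G^*$; on the other hand, the restrictions of $H_0$ and $H_1$ admit, on these subspaces, concrete direct-integral spectral models over $\Delta$ whose fibre inner products are determined by $\Im B_j(\lambda+i0)$. This is the abstract ``nuclear'' spectral picture in which Parseval-type identities make sense.

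Next I would expand $e^{itH_1}e^{-itH_0}E_{H_0}(\Delta)f$ by Duhamel, insert the factorisation $V=G^*JG$, and Abel-regularise the resulting $s$-integral by $e^{-\eps s}$. Parseval's identity on the $s$-variable converts the expression into a $\lambda$-integral over $\Delta$ whose integrand is built from $B_0(\lambda\pm i\eps)$ and $GR_1(\lambda\mp i\eps)G^*$ acting on suitable vectors; the identity \eqref{a17} allows me to re-express the $B_1$ factor purely through $B_0$ and $J$. Hypothesis \eqref{d2} then permits the passage $\eps\to+0$ inside the $\lambda$-integral by dominated convergence on any fixed $\psi\in\calH$, with the dominating function supplied by Parseval-type $L^2$ bounds in the spectral model of the previous step. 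This exhibits $W_\pm(H_1,H_0;\Delta)\psi$ as a norm limit and hence gives existence in the strong operator topology.

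Completeness comes for free by symmetry: the hypotheses \eqref{d1}--\eqref{d2} are stated symmetrically in $j=0,1$, so the identical argument with $H_0$ and $H_1$ interchanged yields existence of $W_\pm(H_0,H_1;\Delta)$; and as recalled in Section~\ref{sec.a1}, existence of both pairs of wave operators automatically implies that all of them are complete. The main technical obstacle I expect is precisely the justification of the Parseval/Abel-limit manipulation, and in particular the commutation of $\slim_{\eps\to+0}$ with the $\lambda$-integration; this is exactly the juncture where the two hypotheses play complementary, non-redundant roles, with \eqref{d1} supplying the absolute continuity that makes the spectral model meaningful and \eqref{d2} supplying the pointwise-a.e.\ norm convergence needed to run dominated convergence.
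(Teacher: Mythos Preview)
The paper does not give its own proof of this proposition: it is stated with the citation \cite[Section~5]{Yafaev} and then used as a black box to derive Theorems~\ref{thm.a5} and \ref{thm.a6}. So there is nothing in the present paper to compare your argument against.

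That said, your outline is a fair high-level description of the stationary method as developed in Yafaev's book: the weak limits \eqref{d1} furnish absolute continuity of the sandwiched spectral measures on $\Delta$ and hence a fibrewise spectral model, while the strong limits \eqref{d2} on vectors allow one to pass to the boundary in the resolvent representation of the (Abel-regularised) wave operator; completeness follows by the $j\leftrightarrow 1-j$ symmetry of the hypotheses, exactly as you say. One technical point you should be prepared to handle carefully is that \eqref{d1}--\eqref{d2} only control the action on $\Ran G^*$, whereas the wave operator must be constructed on all of $\Ran E_{H_0}(\Delta)P^{\ac}_{H_0}$; in Yafaev's treatment this is dealt with by first defining weak (bilinear-form) wave operators on a dense set and then upgrading to strong convergence, and your ``dominated convergence with Parseval-type $L^2$ bounds'' is precisely the place where that upgrade has to be justified. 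Your sketch correctly flags this as the main obstacle.
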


Theorem~\ref{thm.a5} follows immediately from Theorem~\ref{thm.c6} and
Proposition~\ref{prp.d1}.

\begin{proof}[Proof of Theorem~\ref{thm.a6}]
Existence of limits \eqref{d2} is given by Theorem~\ref{thm.c7}. 
Thus, it only remains to check the existence of the weak limits \eqref{d1}. 
It suffices to consider the case $j=0$; we use the notation 
$\mu_0(\delta)=GE_{H_0}(\delta)G^*$ for $\delta\subset \Delta$. 
Let us prove that for a.e. $\lambda\in\Delta$, the weak limit
$$
\wlim_{\eps\to+0}\tfrac1{2\eps} \mu_0(\lambda-\eps,\lambda+\eps)
=:
\mu_0'(\lambda)
$$
exists. Then, by a standard argument, the weak limits \eqref{d1} exist
on the same set of $\lambda\in\Delta$ and are equal to $\pi\mu_0'(\lambda)$. 

For any $\psi,\varphi\in\calH$, there exists a set $Z_{\psi,\varphi}\subset \Delta$
of full Lebesgue measure such that for all $\lambda\in Z_{\psi,\varphi}$ the limit
\begin{equation}
B_\lambda (\psi,\varphi):=\lim_{\eps\to+0}\tfrac1{2\eps}(\mu_0(\lambda-\eps,\lambda+\eps)\psi,\varphi)
\label{d3}
\end{equation}
exists. 
Let $\calH_0$ be a dense countable subset of $\calH$. 
Then by the above, we can choose a subset $Z\subset \Delta$ of full Lebesgue 
measure such that for all $\lambda\in Z$ and all $\psi,\varphi\in \calH_0$ the limit
\eqref{d3} exists. 
Next, there exists a set $Z_0\subset\Delta$ of full Lebesgue measure such that
for all $\lambda\in Z_0$  the limit
$\lim_{\eps\to+0}\frac1{2\eps}\nu_0(\lambda-\eps,\lambda+\eps)$
exists and therefore the Hardy-Littlewood maximal function 
$M\nu_0(\lambda)$ (see \eqref{b2}) is finite. 
Let $\lambda\in Z\cap Z_0$; by the hypothesis 
\eqref{a10}, we get
\begin{multline*}
\abs{B_\lambda(\psi,\varphi)}
\leq 
\norm{\psi}\norm{\varphi}
\limsup_{\eps\to+0}\tfrac1{2\eps}\norm{\mu_0(\lambda-\eps,\lambda+\eps)}
\\
\leq
\norm{\psi}\norm{\varphi}
\limsup_{\eps\to+0}\tfrac1{2\eps}\nu_0(\lambda-\eps,\lambda+\eps)
\leq
\norm{\psi}\norm{\varphi} M\nu_0(\lambda), 
\end{multline*}
and so $B_\lambda(\psi,\varphi)=(\mu_0'(\lambda)\psi,\varphi)$ 
for some bounded operator $\mu_0'(\lambda)$ in $\calH$. 
Further, 
$$
\Abs{
\tfrac1{2\eps}(\mu(\lambda-\eps,\lambda+\eps)\psi,\varphi)
-
(\mu_0'(\lambda)\psi,\varphi)}
\leq
2M\nu_0(\lambda)\norm{\psi}\norm{\varphi}
$$
and therefore one can extend the limiting relation
\eqref{d3} from $\psi,\varphi\in \calH_0$ to 
$\psi,\varphi\in \calH$. 
This completes the proof.
\end{proof}

\section*{Acknowledgements}
The authors are grateful to S.~V.~Kislyakov for 
expert advice on Banach space valued analytic functions and to D.~R.~Yafaev for useful discussions. 
This work was completed during the authors' stay at MSRI Berkeley; 
the authors are grateful to MSRI for hospitality. 
A.P.'s visit to MSRI was supported by the Royal Society International Exchanges
scheme.


\begin{thebibliography}{1}

\bibitem{Bu}
{\sc A.~V.~Buhvalov,} 
\emph{Hardy spaces of vector-valued functions.} (Russian) 
Investigations on linear operators and theory of functions, VII. 
Zap. Nau\v{c}n. Sem. Leningrad. Otdel. Mat. Inst. Steklov. (LOMI) \textbf{65} (1976), 5--16.

\bibitem{Bourgain1}
{\sc J.~Bourgain,} 
\emph{Some remarks on Banach spaces in which martingale 
difference sequences are unconditional,} Ark. Mat. \textbf{21} (1983), no.~2, 163--168.

\bibitem{Bourgain2}
{\sc J.~Bourgain,} 
\emph{Vector-valued singular integrals and the $H^1$-BMO duality,} 
Probability theory and harmonic analysis (Cleveland, Ohio, 1983), 1--19, 
Monogr. Textbooks Pure Appl. Math., 98, Dekker, New York, 1986.

\bibitem{Burk}
{\sc D.~L.~Burkholder,} 
\emph{A geometric condition that implies the existence of certain singular integrals of
Banach-space-valued functions.} 
Conference on harmonic analysis in honor of Antoni Zygmund, Vol. I, II (Chicago, Ill., 1981), 
270--286, Wadsworth Math. Ser., Wadsworth, Belmont, CA, 1983.



\bibitem{Garnett}
{\sc J.~B.~Garnett,}
\emph{Bounded analytic functions,}
Springer, 2007.


\bibitem{Kato1}
{\sc T.~Kato,}
\emph{Perturbation of continuous spectra by trace class operators,} 
Proc. Japan Acad. \textbf{33} (1957), 260--264.

\bibitem{Kato2}
{\sc T.~Kato,}
\emph{Wave operators and similarity for some non-selfadjoint operators,} 
Math. Ann. \textbf{162} (1966) 258--279.

\bibitem{Kato3}
{\sc T.~Kato,}  
\emph{Perturbation theory for linear operators.} 
Second edition.  Springer-Verlag, Berlin-New York, 1976.


\bibitem{Koosis}
{\sc P.~Koosis,}
\emph{Introduction to $H_p$ spaces.} Second edition.
Cambridge University Press, Cambridge, 1998.

\bibitem{Naboko}
{\sc S.~N.~Naboko,}
\emph{Boundary values of analytic operator functions with a positive
imaginary part,}
Journal of Mathematical Sciences, 
\textbf{44}, no.~6 (1989), 786--795.

\bibitem{NVT}
{\sc F.~Nazarov, S.~Treil, A.~Volberg,}
\emph{Weak type estimates and Cotlar inequalities for 
Calder\'on-Zygmund operators on nonhomogeneous spaces,} 
Internat. Math. Res. Notices \textbf{9} (1998), 463--487.


\bibitem{RS2}
{\sc M.~Reed, B.~Simon,}
\emph{Methods of modern mathematical physics. II. Fourier Analysis, Self-adjointness.}
Academic Press, 1975.

\bibitem{Rosenblum}
{\sc M.~Rosenblum,}
\emph{Perturbation of the continuous spectrum and unitary equivalence,} 
Pacific J. Math. \textbf{7} (1957), 997--1010. 


\bibitem{deF}
{\sc J.~Rubio de Francia,}
\emph{Martingale and integral transforms of Banach space valued functions.} 
Probability and Banach spaces (Zaragoza, 1985), 195--222, 
Lecture Notes in Math., 1221, Springer, Berlin, 1986. 



\bibitem{Stein}
{\sc E.~Stein,}
\emph{Singular integrals and differentiability properties of functions,}
Princeton University Press, Princeton, NJ, 1970.

\bibitem{SteinS}
{\sc E.~Stein, R.~Shakarchi,}
\emph{Real analysis,}
Princeton University Press, Princeton, NJ, 2005.


\bibitem{Yafaev}
{\sc D.~R.~Yafaev,}
\emph{Mathematical scattering theory. General theory.}
American Mathematical Society, Providence, RI, 1992. 

\end{thebibliography}
\end{document}